\renewcommand{\P}{\mathbf{P}}
\newcommand{\ind}{\mathbbm{1}}
\newcommand{\D}{\mathbbm{D}}
\newcommand{\E}{\mathbf{E}}
\newcommand{\R}{\mathbb{R}}
\newcommand{\N}{\mathbb{N}}
\newcommand{\lip}{\mathrm{Lip}}
\newcommand{\one}{\mathbbm{1}}
\newcommand{\vertiii}[1]{{\left\vert\kern-0.25ex\left\vert\kern-0.25ex\left\vert #1
    \right\vert\kern-0.25ex\right\vert\kern-0.25ex\right\vert}}
\numberwithin{equation}{section}
\newtheorem{theorem}{Theorem}
\newtheorem{lemma}{Lemma}
\newtheorem{proposition}{Proposition}
\newtheorem{definition}[lemma]{Definition}
\newtheorem{assumption}{Assumption}
\newtheorem{result}{Result}
\numberwithin{lemma}{section}
\numberwithin{theorem}{section}
\numberwithin{result}{section}
\numberwithin{proposition}{section}
\DeclareMathOperator*{\argmin}{arg\,min}
\newif\iftheoremtree
\newif\ifnotationindex
\newif\ifrebuttal
\newlist{thmdependence}{itemize}{10}
\setlist[thmdependence]{nosep,label=-}
\newcommand{\thmtreeref}[2]{\item[\elsewhere] {{\hyperlink{thm tree #2}{\color{gray}#1}}}~\ref{#2}}
\newcommand{\thmtreenode}[5]{\item[#1] \linkdest{thm tree #3} {#2}~\ref{#3} \thmsum{#4}{#5}}
\newcommand{\thmtreenodeproof}[5]{\item[#1] \linkdest{thm tree #3} {#2}~\ref{#3} \linktoproof{proof of #3} \thmsum{#4}{#5}}
\newcommand{\linktoproof}[1]{\hyperlink{#1}{\pflinksymbol}}
\renewcommand{\linktoproof}[1]{}
\newcommand{\pflinksymbol}{{\tiny [Proof]}}
\newcommand{\complete}{{\color{black}\checkmark}}
\newcommand{\elsewhere}{}
\newcommand{\CR}[1]{{\color{red}[CR: #1]}} 
\renewcommand{\CR}[1]{} 
\newcommand{\CRA}[1]{{\color{red}\sout{[CR: #1]}}} 
\renewcommand{\CRA}[1]{} 
\newcommand{\XWA}[1]{{\color{RoyalBlue}\sout{[XW: #1]}}} 
\renewcommand{\XWA}[1]{} 
\newcommand{\thmsum}[2]{\quad{\color{gray}\begin{minipage}[t]{#1\linewidth}{#2}\vspace{0.5\baselineskip}\end{minipage}}}
\newcommand{\linkdest}[1]{\Hy@raisedlink{\hypertarget{#1}{}}}
    \newcommand{\rvtxt}[3]{\linkdest{#1}\hyperlink{back to #2}{\color{red}#3}%
    }
    \newcommand{\rvtxt}[3]{#3}%
\newcommand{\rvidx}[2]{\linkdest{link to #1}\hyperlink{#1}{#2}}
\newcommand{\A}[1]{{\bf\color{NavyBlue}{#1}}}
\title{Large deviations for stochastic fluid networks\\ with Weibullian tails}
\author{
       Mihail Bazhba\thanks{Department of Quantitative Economics, UvA, Netherlands, Roetersstraat 11, {m.bazhba@uva.nl}} \ 
       Chang-Han Rhee\thanks{Department of Industrial Engineering, Northwestern University, US, IL 60208-3109, {chang-han.rhee@northwestern.edu}} \ \ Bert Zwart\thanks{{Department of Stochastics, CWI, Netherlands, Science Park 123, {bert.zwart@cwi.nl and Eindhoven University of Technology}}}\\\\
}
\date{\today}
\begin{document}

\ifrebuttal
\begin{center}
{\LARGE Second Round Revision Report}\\

\bigskip

{\Large M. Bazhba, C.-H. Rhee, B. Zwart}
\end{center}

\noindent
We thank the reviewer and editors again for carefully checking our paper. 
We believe that we have addressed all the comments, and 
the followings are our point-by-point responses to the reviewer's comments. 

In each comment, we provided hyperlinks to the corresponding texts in the paper and color-coded the revised texts with a red font. 
To facilitate the navigation, we also embedded a hyperlink in each revised text so that you can click the revised text to jump back to the corresponding item in the rebuttal. 

\begin{itemize}
\item[(a)] \linkdest{back to (a)}
Modeling Assumption:\\
\A{We do assume that the compound Poisson input processes are independent. We made this assumption clear. Please follow this \rvidx{(a)}{[link]} to see the revised text.}

\item[(b)] Notation system:
    \begin{itemize}
    \item[(1)] \linkdest{back to (1)}
        Unnecessary Notations:\\
        \A{%
        We replaced $\mathscr B(\bm X_n)$ and $\mathscr B(\bm\xi)$ with $\bm b^\intercal \bm Z_n(T)$ and $\bm b^\intercal \phi(\bm \xi)(T)$, respectively. 
        Please follow these links [%
        \rvidx{(1)-1}{1},
        \rvidx{(1)-2}{2},
        \rvidx{(1)-3}{3},
        \rvidx{(1)-4}{4},
        \rvidx{(1)-5}{5},
        \rvidx{(1)-6}{6},
        \rvidx{(1)-7}{7},
        \rvidx{(1)-8}{8},
        \rvidx{(1)-9}{9}]
        to see the revised texts.
        }
    
    \item[(2)] \linkdest{back to (2)}
        The Index System:\\
        \A{%
        Thank you for pointing this out.
        We changed subscripts (for example, $x_i$, $u_i$) to superscripts with parentheses (for example, $x^{(i)}$, $u^{(i)}$) when the index counts the number of jumps. 
        We saved subscripts for the nodes.
        Please follow these links 
        [%
        \rvidx{(2)-1}{1},
        \rvidx{(2)-2}{2},
        \rvidx{(2)-3}{3},
        \rvidx{(2)-4}{4},
        \rvidx{(2)-5}{5}%
        ]
        to see the revised texts.
        }
    
    \item[(3)] \linkdest{back to (3)}
        Overloaded Notations:\\
        \A{%
        We replaced $\bm\beta$ with $\bm\kappa$ (to denote $\bm \gamma - \mathcal Q \bm r$) in the discussion between Definition~\ref{definition-extended-LDP} and Result~\ref{sample-path-ldp-for-Jn}. 
        We saved $\beta$ and $\bm\beta$ for generic scalars and vectors. 
        We also made it clear that $\bm\beta$ in Lemma~\ref{lemma-uniform-supremum-norm} is a generic vector. 
        Please follow these links [%
        \rvidx{(3)-1}{1},
        \rvidx{(3)-2}{2},
        \rvidx{(3)-3}{3},
        \rvidx{(3)-4}{4},
        \rvidx{(3)-5}{5},
        \rvidx{(3)-6}{6},
        \rvidx{(3)-7}{7},
        \rvidx{(3)-8}{8}%
        ]
        to see the revised texts.
        } 
        
    \end{itemize}

\item[(c)] \linkdest{back to (c)}
    The Proof of Lemma 5.1:\\
    \A{%
    There were a couple of typos in the \hyperlink{proof of reduction-to-one-step-functions}{proof} of Lemma~\ref{reduction-to-one-step-functions}, which made it difficult to follow the argument. We fixed the typos and added a more detailed argument that leads to the inequality the reviewer questioned. 
    Please follow this \rvidx{(c)-3}{[link]} to see the revised proof.
    }

\item[$\bullet$] Missing Reference:\linkdest{bullet}\\ 
\A{%
We added a reference to S Foss and D Korshunov, Queueing Systems 52 (1), (2006) 31-48.
Please follow this \rvidx{fossref}{[link]} to see the added reference.
}

\item[1.] \linkdest{back to 1}
page 3, below Assumption 1:\\
\A{%
Thanks for pointing out this typo. We fixed it as suggested.
Please follow this \rvidx{1}{[link]} to see the revised text.
}

\item[2.] \linkdest{back to 2}
page 8, line -4:\\
\A{%
Thanks for pointing out this typo. We fixed it as suggested.
Please follow this \hyperlink{above 2}{[link]} to see the revised text.
}

\item[3.] \linkdest{back to 3}
page 10, line -8, -6:\\
\A{%
Thanks for pointing out this typo. We replaced $\mathcal B({\bm Z}_n)$ with $\bm b^\intercal {\bm Z}_n(T)$.
Please follow these links 
    [%
    \rvidx{3-1}{1},
    \rvidx{3-2}{2}%
    ]
to see the revised text.
}

\item[4.] \linkdest{back to 4}
page 14, line -13:\\
\A{%
Thanks for pointing out this typo. What we meant was the third coordinate. 
Please follow this \rvidx{4}{[link]} to see the revised text.
}

\item[5.] \linkdest{back to 5}
page 15, line -1:\\
\A{%
Indeed, the equality is correct. We changed the inequality to equality as suggested.
Please follow this \rvidx{5}{[link]} to see the revised text.
}

\item[6.] \linkdest{back to 6}
page 23, lines -10, -11:\\
\A{%
We removed $=\liminf_{n\to\infty} \log \P(X_n \in G'\cap E)$ from the math display. 
Please follow this \rvidx{6}{[link]} to see the revised text.
}

\item[7.] \linkdest{back to 7}
page 24, line 12, 14:\\
\A{%
Thank you for catching these mistakes. 
What we meant there was $\phi(\bm X_n)$ instead of $\bm S_n$.
We replaced $\bm S_n$ with $\phi(\bm X_n)$. 
Please follow these links [\rvidx{7-1}{1}, \rvidx{7-2}{2}] to see the revised texts.
}
\end{itemize}

\newpage
\fi

\maketitle

\abstract{
We  consider a stochastic fluid network where the external input processes are compound Poisson with heavy-tailed Weibullian jumps. Our results comprise of large deviations estimates for the buffer content process in the vector-valued Skorokhod space which is endowed with the product $J_1$ topology. To illustrate our framework, we provide explicit results for a tandem queue. At the heart of our proof is a recent sample-path large deviations result, and a novel continuity result for the Skorokhod reflection map in the product $J_1$ topology.  

\noindent
{\bf Keywords.} fluid networks, large deviations, Skorodhod map, heavy tails.

\noindent
{\bf Mathematics Subject Classification:} 60K25, 60F10.

}

\section{Introduction}\label{intro-stochastic-fluid-network}


The past 25 years have witnessed a significant research activity on queueing systems with heavy tails, but the important case of queueing networks has received less attention.
Early papers focused on generalised Jackson networks (\cite{baccelli2004tails}), monotone separable networks (\cite{BaFo2004}), and max-plus networks (\cite{BaLaFo04}). Recent work on tail asymptotics of transient cycle times and waiting times for closed tandem queueing networks can be seen in \cite{kim2015cyclic}.
In two joint papers with Foss, Masakiyo Miyazawa investigated queue lengths in a queueing network with feedback in \cite{FoMi14} and tandem queueing networks in \cite{FoMi18}. Compared to standard queueing networks tracking movements of discrete customers, fluid networks are somewhat more tractable. In an early paper, it was recognized that tail asymptotics for downstream nodes could be obtained by analyzing the busy period of upstream nodes, under certain assumptions (\cite{BoxmaDumas}). The case of a tandem fluid queue where the input to the first node is a L\'evy process with regularly varying jump sizes has been investigated in \cite{Lieshout} exploiting a Laplace transform expression which is available in that case. 

More recently, multidimensional asymptotics for the time-dependent buffer content vector in a fluid queue fed by compound Poisson processes were investigated in \cite{Chen2019}. 
The framework in \cite{Chen2019} allows for the analysis of situations in which a large buffer content may be caused by multiple big jumps in the input process. 
Such results were established before for multiple server queues and fluid queues fed by on-off sources (see, for example, \cite{zwart2004exact}, 
\rvtxt{fossref}{bullet}
{\cite{foss2006heavy}}%
, 
\cite{foss2012large}). 
The results on fluid networks in \cite{Chen2019} were derived assuming regular variation of the jumps in the arrival processes.
Work on fluid networks with light-tailed input is surveyed in \cite{miyazawa2011light}. The goal of the present paper is to investigate the case where jumps are semi-exponential (e.g.\ of Weibull type $\exp\{-x^\alpha\}$ with $\alpha \in (0,1)$). This case is somewhat more difficult to analyze, especially in the case where rare events of interest are caused by multiple big jumps in the input process, as exhibited in the case of the multiple server queue (\cite{bazhba2019queue}).

We focus on a stochastic fluid network comprised of $d$ nodes, with external inputs modeled as compound Poisson processes  with semi-exponential increments.
We are interested in the event that an arbitrary linear combination of the buffer contents in the network exceeds a large value.  We write this functional as a mapping of the input processes using the well-known multidimensional Skorokhod reflection map on the positive orthant (see e.g.\ \cite{whitt2002stochastic}), and apply a sample-path large deviations principle for the superposition of Poisson processes, which has recently been derived in \cite{bazhba2020sample}. This  sample-path large deviation principle has been established for Poisson processes with semi-exponential jumps, and  holds in the product $J_1$ topology. To apply the contraction principle (the analogue of the continuous mapping argument in a large deviations context), we need to show that the Skorokhod map has suitable continuity properties. The $J_1$ product topology is not as strong as the standard $J_1$ topology on $\R^d$, and it turns out that continuity can only be established for input processes with nonnegative jumps. However, this result, presented in Theorem~\ref{lipschitz-continuity-phi} below, is sufficient for our proof strategy to work.

The contraction principle leads to an expression of the rate function which we analyze in detail. Under some generality, we show that the upper and lower bound of the large deviations bounds match. 
We conjecture that each input process contributes to a large fluid level by a finite number of big jumps, and the computation of the rate function can be reduced to a concave optimization problem with a finite number of decision variables. We illustrate this by reducing the optimization problem to a finite dimensional problem and then explicitly solving it for the case $d=2$ in Section~\ref{tandem}.

The outline of this paper is as follows: Section~\ref{model-description-preliminaries} contains a description of our model, the topological space in which the input processes are defined, and an introduction to the reflection map. In Sections~\ref{SFN-SP-BCP}, \ref{SFN-overflow-prob}, and \ref{tandem} we present our main results: upper and lower large deviation bounds for the buffer content process, logarithmic asymptotics for overflow probabilities of the buffer content process over fixed times, and an explicit analysis of the two-node tandem network.  
Section~\ref{proofs-stochastic-networks} contains technical proofs. We end this paper with an appendix where we develop several auxiliary large deviations results. 
		
\section{Model description and preliminary results}
\label{model-description-preliminaries}

\subsection{The Model}
In this section, we describe our model and we present some preliminary results that are used in our analysis.
We consider a single-class open stochastic fluid network with $d$ nodes. 
We denote the total amount of external work that arrives at station $i$ with \linkdest{nota-J_i-i-in-mathcal-J}$J_i(t) \triangleq \sum_{j=1}^{N_i(t)}J^{(j)}_i$ which is a compound Poisson process with mean \linkdest{nota-gamma}$\gamma_i$ where $\{J^{(j)}_i\}_{j=1,2,\ldots}$ is an iid jump size sequence for each $i=1,\ldots, d$. 
If no exogenous input is assigned to node $i$, then we set \linkdest{nota-J_i-i-notin-mathcal-J}$J_i(\cdot) \equiv 0$, and \linkdest{nota-gamma_i}$\gamma_i \triangleq 0$. 
We define \linkdest{nota-mathcal-J}$\mathcal{J}$ as the subset of nodes that have an exogenous input.
\rvtxt{(a)}{(a)}
{%
We assume that $\{J_1(t): t\geq 0\}, \{J_2(t): t\geq 0\}, \ldots, \{J_d(t): t\geq 0\}$'s are independent.%
}
For notational convenience, we assume that the Poisson processes \linkdest{nota-N_i}$\{N_i(t)\}_{t \geq 0}$ have unit rate for $i \in \mathcal{J}$.  
The key assumption on the distribution of the jump sizes $J^{(1)}_i$ for $i \in \mathcal{J}$ is that they are semi-exponential:
\begin{assumption}\label{right-tail-W-i}
For each $i \in \mathcal{J} \subseteq \{1,\ldots,d\}$,  $\P\big(J^{(1)}_i \geq x \big) = e^{-c_i L(x)x^{\alpha}}$ where \linkdest{nota-alpha}$\alpha \in (0,1)$,  \linkdest{nota-c_i}$c_i \in (0,\infty),$ and $L$ is a slowly varying function such that $L(x)/x^{1-\alpha}$ is non-increasing for sufficiently large $x$.  
\end{assumption}
Recall that $L$ is slowly varying if 
\rvtxt{1}{1}{%
\linkdest{nota-L}$L(ax)/L(x)\rightarrow 1$%
}
as $x\rightarrow\infty$ for each $a>0$.
At each node $i \in \{1,\ldots,d\}$, the fluid is processed and released at a deterministic rate $r_i$.
Fractions of the processed fluid from each node are then routed to other
nodes or leave the network. We characterize the stochastic fluid network  by a four-tuple \linkdest{nota-bm-J}$(\bm{J}, \bm{r}, Q, \bm{X}(0))$, where $\bm{J}(\cdot) = \big(J_1(\cdot),\ldots,J_d(\cdot)\big)$ is the vector of the assigned
 input  processes at each one of the $d$ nodes, respectively.  
The vector \linkdest{nota-bm-r}$\bm{r} \triangleq  (r_1 ,\ldots , r_d )^\intercal$ is the vector of
deterministic output rates at the $d$ nodes, \linkdest{nota-Q}$Q \triangleq [q_{ij}]_{i,j \in \{1,\ldots,d\}} $
is a $d \times d$ substochastic routing matrix, and
\linkdest{nota-bm-X(0)}$\bm{X}(0) \triangleq (X_1 (0),\ldots, X_d(0))$ is a nonnegative random vector of initial contents at the $d$ nodes. 
%
%
If the buffer at node $i$ and at time $t$ is nonempty, then there is fluid output from node $i$ at a
constant rate $r_i$. On the other hand, if the buffer of node $i$ is empty at time $t$, the output rate equals the minimum
of the combined (i.e., both external and internal) input rates and the 
output rate $r_i$.

We now provide more details on the stochastic dynamics of our network. A proportion $q_{ij}$ of all output from node $i$ is
immediately routed to node $j$, while the remaining proportion $q_i \triangleq   1-\sum_{j=1}^{k}q_{ij}$ leaves 
the network. We assume that $q_{ii} \triangleq 0$, and the routing matrix $Q $ is substochastic, so that $q_{ij} \geq  0$,
and $q_i \geq 0$ for all $i, j$. We also assume that $Q^n \to 0$ as $n \to \infty$ which implies that all input eventually leaves the network.
Let $Q^\intercal$ be the transpose matrix of $Q$. 
Though we focus on time-dependent behavior, we consider the scenario that the fluid network is stable, ensuring that a high level of fluid is a rare event. 
Let \linkdest{nota-mathcal-Q}$\mathcal{Q}=(\mathrm{I}-Q^\intercal)$.
We guarantee the stability of the network by posing the following assumption, based on \cite{kella1996stability}:
 
\begin{assumption}\label{stability-assumption}
Let \linkdest{nota-bm-gamma}$\bm{\gamma}=(\gamma_1,\ldots,\gamma_d)^\intercal$, and assume that $\bm{r} > \mathcal Q^{-1}\boldsymbol{\gamma}$.
\end{assumption}


Due to our model specifics, the buffer content at station $i$ is processed at a constant rate $r_i$ from the $i$-th server; and a proportion $q_{ij}$ is routed from the $i$-th station to the $j$-th server. 
To define the buffer content process we first define the potential content
vector $\bm{X}(t)$ 
$$
\linkdest{nota-bm-X(t)}\bm{X}(t) \triangleq \bm{X}(0)+ \bm{J}(t)-\mathcal{Q}\bm{r}\cdot t, \quad t\geq 0.
$$
Let \linkdest{nota-bm-Z}$\bm{Z}_i(t)$ denote the buffer content of the $i$-th station at time $t$. We can define the buffer content process by the so-called reflection map. We first provide an intuitive description of this map. It is defined in terms of a pair of processes $(\bm{Z},\bm{Y})$ that solve the  differential equation
\begin{equation}\label{Skorohod-stoc-dif-eq}
    d\bm{Z}(t)=d\bm{X}(t)+\mathcal{Q}d\bm{Y}(t), \ t\geq 0.
\end{equation}
Here, \linkdest{nota-bm-Y}$\bm{Y}(\cdot)$ is non-decreasing  and ${\bm Y}_i(t)$ only increases at times where $\bm{Z}_i(t)=0$ for all $i$ and all $t$.  
Consequently, as we assume $\bm{Z}(0)=0$, the buffer content is
\begin{equation}\label{buffer-content}
\bm{Z}(t) = \bm{X}(t)+\mathcal{Q} \bm{Y}(t), \ t \geq 0.
\end{equation}
 We call the map  $\bm{X} \mapsto (\bm{Y}, \bm{Z})$ the reflection map. We now provide a more rigorous definition of this map.

\subsection{The reflection map with discontinuities}\label{SFN-prelim-reflection-map}
We start with the definition of the reflection map.
Fix an arbitrary $T>0$. 
Let \linkdest{nota-D[0,T]}$\D[0,T]$ denote the Skorokhod space: the space of c\`adl\`ag paths over the time horizon $[0,T]$. 
Note that for our large deviations analyses, we will consider linearly scaled processes in $\D[0,T]$, and hence, this translates considering the time horizon $[0,nT]$ for the original unscaled processes. 
Denote with \linkdest{nota-D^uparrow}$\D^{\uparrow}[0,T]$ the subspace of the Skorokhod space consisting of non-decreasing functions that are non-negative at the origin. 
Note that we use the component-wise partial order on $\D[0,T]$ and $\R^d$. That is, we write
$\bm x \triangleq (x_1,\ldots, x_d) \leq \bm y \triangleq (y_1,\ldots,y_d)$ in $\R^d$ if $x_i \leq y_i$ in $\R$ for all $i \in \{1,\ldots,d\}$, and we write $\bm \xi \triangleq (\xi_1,\ldots,\xi_d) \leq \bm \zeta \triangleq (\zeta_1,\ldots,\zeta_d)$ in $\prod_{i=1}^{d}\D[0,T]$ if $\bm\xi(t)\leq\bm \zeta(t)$ in $\R^d$ for all $t\in [0,T]$.


\begin{definition}\label{definitionofreflectionmap} (Definition 14.2.1 of \cite{whitt2002stochastic})
For any $\bm\xi \in \prod_{i=1}^{d}\D[0,T]$ and any reflection matrix $\mathcal{Q}=(\mathrm{I}-Q^\intercal)$, let the feasible regulator set be 
$$
\linkdest{nota-Psi}\Psi(\bm\xi) \triangleq \left\{\bm\zeta \in \prod_{i=1}^{d}\D^{\uparrow}[0,T]: \bm\xi+\mathcal{Q}  \bm\zeta \geq 0\right\},
$$
and let the reflection map be 
$$\linkdest{nota-bm-R}\bm{R} \triangleq (\psi,\phi): \prod_{i=1}^{d}\D[
0,T] \to \prod_{i=1}^{d}\D[
0,T]\times \prod_{i=1}^{d}\D[
0,T], $$ with regulator component 

$$\linkdest{nota-psi}\psi(\bm\xi) \triangleq \inf \left\{\Psi(\bm\xi)\right\} = \inf\left\{\bm w \in \prod_{i=1}^{d}\D[0,T]: \bm w \in \Psi(\bm\xi)\right\},$$
and content component 
$$\phi(\bm\xi) \triangleq \bm\xi+ \mathcal{Q}  \psi(\bm\xi).$$
\end{definition}
The infimum in the definition of  $\psi$ may not exist in general. However, in Theorem 14.2.1 of \cite{whitt2002stochastic}, it is proven that the reflection map is properly defined with the component-wise order. That is,
$$\psi_i(\bm\xi)(t) = \inf\{\omega_i(t) \in \R: \bm \omega \in \Psi(\xi)\} \ \text{for all} \ i\in \{1,\ldots,d\} \ \text{and} \ t \in [0,T].$$
In addition, the regulator set $\Psi(\bm \xi)$ is non-empty and its infimum is attained in $\Psi(\bm\xi)$ itself.
Now, we state some important results regarding the properties of $(\phi,\psi)$. The following result gives an explicit representation of the solution of the Skorokhod problem. 

\begin{result} (Theorem 14.2.1, Theorem 14.2.5 and Theorem 14.2.7  of \cite{whitt2002stochastic})
\label{continuity-of-the-reflection-map}
  If $\bm Y(\cdot)=\psi(\bm{X})(\cdot)$ and $\bm{Z}(\cdot)=\phi(\bm{X})(\cdot)$, then $(\bm{Y}(\cdot),\bm{Z}(\cdot))$ solves the Skorokhod problem associated with the equation (\ref{Skorohod-stoc-dif-eq}). The mappings $\psi$ and $\phi$ are Lipschitz continuous maps w.r.t.\ the uniform metric.
\end{result}

The next result is a useful property of the Skorokhod map. It allows us to describe the discontinuities of the reflection map under some mild assumptions. 

\begin{result}\label{size-discontinuities-of-phi} (Lemma 14.3.3, Corollary 14.3.4 and Corollary 14.3.5 of \cite{whitt2002stochastic})
	Consider $\bm\xi \in \prod_{i=1}^{d}\D[0,T]$. 
	Let $Disc(\psi(\bm\xi))$ and $Disc(\phi(\bm\xi))$ denote the sets of discontinuity points of $\psi(\bm\xi)$ and $\phi(\bm\xi)$, respectively. 
	Then it holds that $Disc(\psi(\bm\xi)) \cup Disc(\phi(\bm\xi))=Disc(\bm\xi)$. 
	In addition, if $\bm\xi$ has only positive jumps, then $\psi(\bm\xi)$ is continuous and 
	\[
	\phi(\bm\xi)(t)-\phi(\bm\xi)(t-)=\bm\xi(t)-\bm\xi(t-).
	\]
\end{result}

\begin{result}\label{generic-upperbound-for-psi} (Theorem 14.2.6 of \cite{whitt2002stochastic})
If $\bm\xi \leq \bm\zeta$ in $\prod_{i=1}^{d}\D[0,T]$, $T>0$, then $\psi(\bm\xi) \geq \psi(\bm\zeta)$.
\end{result}

\subsection{Topologies and large deviations}\label{SFN-prelim-ldps}

In this section, we introduce our preliminary results on sample-path large deviations for the input and the content process. 
We begin with setting the notation. 
\CRA{Introduce $\|\cdot\|_1$ and $\|\cdot\|$ first, and then $d_{J_1}$.}%
For any \linkdest{nota-bm-beta}$\bm \beta = (\beta_1,\ldots,\beta_d) \in \R^d$, let $\|\bm \beta\|_1$ denote the usual $\ell_1$-norm: \linkdest{nota-vector-norm}$\|\bm \beta\|_1 = \sum_{i=1}^d |\beta_i|$.
For \linkdest{nota-path-norm}$\bm\xi = (\xi_1,\ldots, \xi_d)\in\prod_{i=1}^d \D[0,T]$, let $\|\bm \xi\| \triangleq \sup_{t\in[0,T]} \|\bm\xi(t)\|_1$. 
For large deviations results, we mainly work with the $J_1$ topology on $\D[0,T]$, and it's product topology on $\prod_{i=1}^d \D[0,T]$. 
Recall that in $\D[0,T]$, $J_1$ topology $\mathcal T_{J_1}$ is the one induced by the $J_1$ metric $d_{J_1}$:
$$
d_{J_1}(\xi, \zeta) = \inf_{\lambda \in \Lambda[0,T]} \left(\sup_{t\in[0,T]}\big|\xi\circ \lambda(t) - \zeta(t)\big|\right) \vee \left(\sup_{t\in[0,T]}\big|\lambda(t) - e(t)\big|\right) = \inf_{\lambda \in \Lambda[0,T]} \|\xi\circ \lambda - \zeta\| \vee \|\lambda - e\|,
$$
for $\xi, \zeta \in \D[0,T]$, where \linkdest{nota-e}$e:[0,T]\to[0,T]$ is the identity map $t\mapsto t$, and \linkdest{nota-Lambda}$\Lambda[0,T]$ is the set of all increasing homeomorphisms from $[0,T]$ to $[0,T]$.
In order to study networks, we need to set a topology in the vector-valued function space. 
That is, we work in the functional space $(\prod_{i=1}^{d}\D[0,T],\prod_{i=1}^{d}\mathcal{T}_{J_1})$ which is a product space equipped with the product $J_1$ topology $\prod_{i=1}^{d}\mathcal{T}_{J_1}$, which is induced by the product metric $d_p$: 
$$
\linkdest{nota-d-p}
d_p(\bm \xi,\bm\zeta) = \sum_{i=1}^{d}d_{J_1}(\xi_i,\zeta_i)
$$
for $\bm \xi, \bm \zeta \in \prod_{i=1}^{d}\D[0,T]$ such that $\bm \xi=(\xi_1,\ldots,\xi_d)$ and $\bm \zeta=(\zeta_1,\ldots,\zeta_d)$. 
Unless specified otherwise, all the topological properties discussed in this paper are w.r.t.\ the topology generated by $d_p$.

\subsubsection{Some useful continuous functions}

The following two lemmas are elementary. Their proofs are provided in Appendix \ref{appendix-continuity}. 
\begin{lemma}\label{Continuity-Upsilon-k-multi}
\linktoproof{proof of Continuity-Upsilon-k-multi}
For $\bm\beta \in \R^d$, let \linkdest{nota-Upsilon_bm-beta}$\Upsilon^{\bm\beta}: \prod_{i=1}^{d}\D[0,T] \to \prod_{i=1}^{d}\D[0,T]$ be such that $\Upsilon^{\bm\beta}(\bm\xi)(t)=\bm\xi(t)+\bm\beta\cdot t$. Then, 
\begin{itemize}
\item[i)] $\Upsilon^{\bm\beta}$ is Lipschitz continuous w.r.t.\ $d_p$, 
\item[ii)] $\Upsilon^{\bm\beta}$ is a homeomorphism.
\end{itemize}
\end{lemma}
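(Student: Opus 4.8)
The plan is to reduce the multidimensional statement to a one-dimensional one and then to observe that a single time change can be re-used for both $\Upsilon_\kappa(\xi)$ and $\Upsilon_\kappa(\zeta)$, the extra error incurred by the deterministic drift $\kappa t$ being controlled by $\|\lambda - e\|_\infty$, where $e$ denotes the identity on $[0,T]$.

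First I would note that $\Upsilon_\kappa$ acts coordinatewise, $\Upsilon_\kappa(\xi)^{(i)}(t) = \xi^{(i)}(t) - \kappa_i t$, and that $\xi^{(i)} - \kappa_i e$ is again c\`adl\`ag, so $\Upsilon_\kappa$ does map $\prod_{i=1}^{d}\D[0,T]$ into itself. Since $d_p(\xi,\zeta) = \sum_{i=1}^{d} d_{J_1}(\xi^{(i)},\zeta^{(i)})$, it then suffices to bound $d_{J_1}(x - c\,e,\, y - c\,e)$ for scalar $x,y \in \D[0,T]$ and $c \in \R$, and to sum the resulting estimates over the coordinates.

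For the scalar bound I would fix $\eps>0$ and choose a time change $\lambda$ in the definition $d_{J_1}(x,y) = \inf_\lambda \big(\|\lambda - e\|_\infty \vee \|x - y\circ\lambda\|_\infty\big)$ that is within $\eps$ of the infimum, and then use the same $\lambda$ for the shifted functions. The pointwise identity $(x(t) - ct) - (y(\lambda(t)) - c\lambda(t)) = \big(x(t) - y(\lambda(t))\big) + c\,(\lambda(t) - t)$ gives $\|(x - c\,e) - (y - c\,e)\circ\lambda\|_\infty \le \|x - y\circ\lambda\|_\infty + |c|\,\|\lambda - e\|_\infty$, whence $d_{J_1}(x - c\,e,\, y - c\,e) \le (1+|c|)\,(d_{J_1}(x,y) + \eps)$. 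Letting $\eps \downarrow 0$ and taking a maximum over coordinates yields (i) with Lipschitz constant $1 + \max_{i}|\kappa_i|$. For (ii), $\Upsilon_\kappa$ is a bijection with inverse $\Upsilon_{-\kappa}$, since $\Upsilon_{-\kappa}\circ\Upsilon_\kappa$ and $\Upsilon_\kappa\circ\Upsilon_{-\kappa}$ both equal the identity identically; by (i) applied to $-\kappa$ the inverse is Lipschitz, hence continuous, so $\Upsilon_\kappa$ is a homeomorphism (in fact bi-Lipschitz).

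There is no serious obstacle here; the only point demanding a little attention is the choice of $J_1$ metric. With the Billingsley-type metric used above the estimate is immediate, whereas if one works with Whitt's complete metric (involving $\sup_{s\ne t}\big|\log\tfrac{\lambda(t)-\lambda(s)}{t-s}\big|$) one first observes that on the compact interval $[0,T]$ a small value of that quantity forces $\|\lambda - e\|_\infty$ to be small, which affects the numerical Lipschitz constant but not the conclusion. Hence the lemma is elementary, as claimed.
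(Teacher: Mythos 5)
Your argument is correct and follows essentially the same route as the paper: pick a near-optimal time change $\lambda^{(i)}$ per coordinate, reuse it for the drift-shifted paths, bound the extra error by $|\kappa_i|\,\|\lambda^{(i)}-e\|_\infty$, sum over coordinates, and for (ii) observe that $\Upsilon_{-\kappa}$ is the inverse and apply (i) to it. The only cosmetic difference is the constant: since $d_p$ is the \emph{sum} of the coordinate $J_1$ distances, your bound $1+\max_i|\kappa_i|$ does follow from $\sum_i(1+|\kappa_i|)a_i \le (1+\max_i|\kappa_i|)\sum_i a_i$, and is in fact a slightly sharper constant than the paper's, but the substance is identical.
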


\begin{lemma}\label{continuity-of-the-projection-map}
\linktoproof{proof of continuity-of-the-projection-map}
For any $\bm b \in \R^d$, the mapping $\bm \xi \mapsto \bm b^\intercal \bm\xi(T)$ from $ \prod_{i=1}^{d}\D[0,T]$ to $\R$ is Lipschitz continuous  w.r.t.\ $d_p$. 
\end{lemma}

A key step in our approach is to establish the Lipschitz continuity of the regulator and the buffer content maps w.r.t.\ $d_p$. 
This is executed in Proposition~\ref{j1-product-uniform-continuity-regulator} and Theorem~\ref{lipschitz-continuity-phi} below. 
Their proofs are provided in Section~\ref{proofs-stochastic-networks}. 
Recall that $\D^{\uparrow}[0,T]$ is the subspace of the Skorokhod space containing non-decreasing paths which are non-negative at the origin. 
We say that $\xi\in \D[0,T]$ is a pure jump function if 
\rvtxt{(2)-1}{(2)}{$\xi = \sum_{j=1}^\infty x^{(j)}\mathbbm{1}_{[u^{(j)},T]}$ for some $x^{(j)}$'s and $u^{(j)}$'s such that $x^{(j)}\in \R$ and $u^{(j)}\in[0,T]$ for each $j$, and the $u^{(j)}$'s are all distinct.} 
Let 
\linkdest{nota-D-leq-infty-uparrow}$\D^{\uparrow}_{\leqslant \infty}[0,T]$ be the subspace of $\D[0,T]$ consisting of non-decreasing pure jump functions that assume non-negative values at the origin.
Subsequently, let 
\linkdest{nota-D-leq-k-uparrow}$\D^\uparrow_{\leqslant k}[0,T] \triangleq \{\xi \in \D[0,T]: 
\rvtxt{(2)-2}{(2)}{\xi= \sum_{j=1}^{k}x^{(j)}\one_{[u^{(j)},T]}, \ x^{(j)} \geq 0, \ u^{(j)}} \in [0,T], \ j=1,\ldots,k \}$ be the subset of $\D^{\uparrow}_{\leqslant \infty}[0,T]$ containing pure jump functions of at most $k$ jumps.
In addition, for $\beta \in \R$, let 
\linkdest{nota-D_leqslant-k^beta}$\D^{\beta}_{\leqslant k}[0,T] \triangleq \{\zeta \in \D[0,T]: \zeta(t)= \xi(t)+\beta \cdot t, \ \xi \in \D^{\uparrow}_{\leqslant k}[0,T] \}$
and
\linkdest{nota-D_leqslant-infty^beta}$\D^{\beta}_{\leqslant \infty}[0,T] \triangleq \{\zeta \in \D[0,T]: \zeta(t)= \xi(t)+\beta \cdot t, \ \xi \in \D^{\uparrow}_{\leqslant \infty}[0,T] \}.$
Let \linkdest{nota-D_leqslant-k}$\D_{\leqslant k}[0,T]$ denote the subspace of $\D[0,T]$ consisting of paths with at most $k$ jumps, i.e.\ $\D_{\leqslant k}[0,T] = \{\xi\in \D[0,T]: |Disc(\xi)| \leq k\}$.
Finally, let \linkdest{nota-D^beta}$\D^{\beta}[0,T] \triangleq \{\zeta \in \D[0,T]: \zeta(t)= \xi(t)+\beta \cdot t, \ \xi \in \D^{\uparrow}[0,T] \}$.

\begin{proposition}\label{j1-product-uniform-continuity-regulator}
Let $\bm \beta = (\beta_1,\ldots,\beta_d)\in \R^d$. The regulator map $\psi$ is Lipschitz continuous w.r.t.\ $d_p$ on $\prod_{i=1}^{d}\D^{\beta_i}[0,T]$ with Lipschitz constant at most $d(2d^2(2d+1)K\|\bm\beta\|_1 + Kd\vee 1)$. 
\end{proposition}
Since $\phi(\bm\xi) = \bm\xi + \mathcal Q \psi(\bm\xi)$, the following is a corollary of Proposition~\ref{j1-product-uniform-continuity-regulator}.
\begin{theorem}\label{lipschitz-continuity-phi}
Let $\bm \beta = (\beta_1,\ldots,\beta_d)\in \R^d$. The reflection map $\bm{R}=(\phi,\psi)$ is Lipschitz continuous w.r.t.\ $d_p$ on $\prod_{i=1}^{d}\D^{\beta_i}[0,T]$.
\end{theorem}

Note that the restriction of the domain to the paths without downward jumps is essential for this type of results to hold.
Since the order in which the jumps take place  matters for the action of the reflection map, we cannot ensure the continuity of the reflection map without such extra regularity conditions. The main difficulty arises with  paths which have jumps with different signs in multiple coordinates appearing almost simultaneously (K. Ramanan, personal communication). 
 
\subsubsection{The extended sample-path LDP for the potential buffer content process}

We first review the notion of extended LDP. 
Let $(\mathbb S, d)$ be a metric space, and $\mathcal T$ denote the topology induced by the metric $d$.
Let $\{X_n\}$ be a sequence of $\mathbb S$-valued random variables.
Let $I$ be a non-negative lower semi-continuous function on $\mathbb S$, and $\{a_n\}$ be a sequence of positive real numbers that tends to infinity as $n\to \infty$.
\begin{definition}\label{definition-extended-LDP}
	The probability measures of $(X_n)$ satisfy an \emph{extended} LDP in $(\mathbb S, d)$ with speed $a_n$ and rate function $I$ if
	$$
	-\inf_{x\in A^\circ}I(x)
	\leq \liminf_{n\to\infty} \frac{\log \P(X_n\in A)}{a_n}
	\leq
	\limsup_{n\to\infty} \frac{\log \P(X_n\in A)}{a_n}
	\leq
	-\lim_{\epsilon\to 0}\inf_{x\in A^\epsilon}I(x)
	$$
	for any measurable set $A$.
\end{definition}
Here we denote \linkdest{nota-A^epsilon}$A^{\epsilon} \triangleq \{\xi\in \mathbb S: d(\xi, A) \leq \epsilon \}$ where $d(\xi, A) = \inf_{\zeta \in A} d(\xi, \zeta)$.
The notion of an extended LDP has been introduced in \cite{borovkov2010large} and is  useful in the setting of semi-exponential random variables, in which a full LDP is provably impossible, as shown in \cite{bazhba2020sample}.
One important implication of extended LDP is an analog of the contraction principle. 
In the context of the extended LDP, the contraction principle requires Lipschitz continuity as opposed to mere continuity; see Lemma~\ref{alternative-yes-and-not-extended-contraction-principle}.

The main results of this paper in Sections~\ref{SFN-SP-BCP}, \ref{SFN-overflow-prob}, and \ref{tandem} are based on such contraction principles coupled with an extended LDP associated with the probability measures of the input process $\bm J(\cdot)$. 
Specifically, the time evolution of $\bm Z(\cdot)$ may be written as
\[
\bm{Z}(t)=\bm{J}(t)-\boldsymbol{\gamma}t+(
\boldsymbol{\gamma}
-\mathcal{Q}\bm{r})t+\mathcal{Q}\bm{Y}(t), \quad t \geq 0.
\]
Equivalently, if we consider the scaled and centered input process \linkdest{nota-bar-J_n}${\bar{\bm{J}}}_n(\cdot) \triangleq \frac1n\bm{J}(n\cdot)-\bm\gamma\cdot e(\cdot)$, scaled potential buffer content process \linkdest{nota-bm-X_n}$\bm{X}_n(\cdot)\triangleq\frac{1}{n}\bm{X}(n\cdot)$, scaled regulator $\bm{Y}_n \triangleq\frac1n\bm Y(n\cdot)$, and scaled buffer content $\bm{Z}_n \triangleq \frac1n \bm Z(n\cdot)$, then
$$
\bm{Z}_n(t)= \bar{\bm{J}}_n(t)+
\rvtxt{(3)-1}{(3)}{\bm \kappa }t 
+ \mathcal{Q}\bm{Y}_n(t),\quad t \geq 0,
$$ 
where 
$
\rvtxt{(3)-2}{(3)}{\bm{\kappa}}
\triangleq \bm{\gamma}-\mathcal{Q}\bm{r}$.
Note that
$
\bm{Z}_n 
= 
\phi(\bm X_n) 
=
\phi \circ \Upsilon^{
\rvtxt{(3)-3}{(3)}{\bm \kappa }
}(\bar{\bm{J}}_n) 
$.
Therefore, an extended LDP for $\bm{Z}_n$ can be deduced from that of $\bm X_n$, which, in turn, can be deduced from that of $\bar {\bm J}_n$, 
if $\phi$ and $\Upsilon^{\rvtxt{(3)-4}{(3)}{\bm \kappa}}$ are Lipschitz continuous in $J_1$ topology. 
Hence, the Lipschitz continuity of the shifting operator $\Upsilon^{\rvtxt{(3)-5}{(3)}{\bm \kappa}}$ and the content component map $\phi$ proved earlier in this section will play pivotal roles in our approach.

Now we conclude this section with establishing the desired extended LDP for the multidimensional input process $\bar{\bm J}_n$ and the potential buffer content process $\bm X_n$ of the stochastic fluid network.
For any $\xi \in \D[0,T]$, let $$I(\xi)=\sum_{ \{t: \xi(t) \neq  \xi(t-)\}}\left( \xi(t)- \xi(t-)\right)^\alpha.$$  
The next result is an immediate consequence of 
Theorem~2.3 and Remark~2.2 in \cite{bazhba2020sample}, combined with Lemma~\ref{E-LDP-on-subspaces-full-measure}.

\begin{result}\label{sample-path-ldp-for-Jn} 
The probability measures of $\bar{\bm{J}}_n$ satisfy the extended LDP in  $\big(\prod_{i=1}^{d}\D^{-\gamma_i}[0,T], \prod_{i=1}^{d}\mathcal{T}_{J_1}\big)$ with speed $L(n)n^{\alpha}$ and rate function ${I}^{(d)}:\prod_{i=1}^{d}\D^{-\gamma_i}[0,T] \to [0,\infty]$, where
\begin{equation}\label{eq:rate-function-Id-initial}
\linkdest{nota-I^(d)}{I}^{(d)}(\bm\xi)=
\begin{cases}
\sum_{j \in \mathcal{J}}c_jI(\xi_j)  & \text{if}\quad \xi_j \in \D^{\uparrow}_{\leqslant \infty}[0,T]  \quad\text{for}\quad j \in \mathcal{J} 
\quad\text{and}\quad \xi_j \equiv 0 \quad\text{for}\quad j \notin \mathcal{J},
 \\
\infty & otherwise.\\
\end{cases}
\end{equation}
\end{result}
Next, recall that $\bm X_n = \Upsilon^{\rvtxt{(3)-6}{(3)}{\bm \kappa}}(\bar {\bm J}_n)$.
Due to Lemma~\ref{Continuity-Upsilon-k-multi}, $\Upsilon^{\rvtxt{(3)-7}{(3)}{\bm \kappa}}$  is Lipschitz continuous and is a homeomorphism with respect to the product $J_1$ metric. 
The following extended large deviation principle for $\bm{X}_n(\cdot)$ is a direct consequence of Result~\ref{sample-path-ldp-for-Jn} and \textit{ii)} of Lemma~\ref{alternative-yes-and-not-extended-contraction-principle}.

%

\begin{result}\label{theorem:multi-d+1-content}
The probability measures of \,$\bm{X}_n$ satisfy an extended LDP in $\big(\prod_{i=1}^{d}\D^{-(\mathcal{Q}\bm{r})_i}[0,T], \prod_{i=1}^{d} \mathcal{T}_{J_1}\big)$ with speed $L(n)n^\alpha$ and with rate function\linkdest{above 2}
\begin{equation}
\linkdest{nota-tilde-I^(d)}
\tilde{I}^{(d)}(\bm\xi)=
\begin{cases}
\sum_{j \in \mathcal{J}}c_jI(\xi_j)
& \text{if}\quad \xi_j \in 
\rvtxt{2}{2}{
\D^{(\boldsymbol{\gamma}-\mathcal{Q}\bm{r})_j}_{\leqslant \infty}[0,T]
} 
\quad\text{for}\quad j \in \mathcal{J} 
\\ 
& \quad\text{and}\quad \xi_j = -(\mathcal{Q}\bm{r})_{j}\cdot e 
\quad\text{for}\quad j \notin \mathcal{J},  
\\
\infty & otherwise.\\
\end{cases}
\end{equation}
\end{result}
We are now ready to state our first main result in the next section.

\section{Large deviations for the buffer content process}\label{SFN-SP-BCP}

In this section, we state large deviation bounds for the scaled buffer content process $\bm{Z}_n$.
We apply an analogue of the contraction principle for extended LDP's (Lemma~\ref{alternative-yes-and-not-extended-contraction-principle}) to obtain  asymptotic estimates for the probability measures of $(\bm{Z}_n)$:
 

\begin{theorem}\label{samplepathldpofstochasticnetworks}
The probability measures of $\bm{Z}_n$ satisfy:
\begin{itemize}
\item[i)] For any set $F$ that is closed  in $\big(\prod_{i=1}^{d}\D[0,T],\prod_{i=1}^{d}\mathcal{T}_{J_1}\big)$,
\[
\limsup_{n \to \infty}\frac{1}{L(n)n^{\alpha}}\log \P\left( \bm{Z}_n \in F\right) \leq -\lim_{\epsilon \to 0}\inf_{\bm\xi \in F^{\epsilon}}I_{\bm Z}(\bm\xi).
\]
\item[ii)] For set $G$ that is open in $\big(\prod_{i=1}^{d}\D[0,T],\prod_{i=1}^{d}\mathcal{T}_{J_1}\big)$,
\[
\liminf_{n \to \infty}\frac{1}{L(n)n^{\alpha}}\log \P\left( \bm{Z}_n \in G\right) \geq -\inf_{\bm\xi \in G}I_{\bm Z}(\bm\xi), 
\]
\end{itemize}
where 
\[
\linkdest{nota-I-S}
I_{\bm Z}(\zeta) =\inf\Big\{\tilde{I}^{(d)}(\bm\xi): \bm\zeta=\phi(\bm\xi), \ \bm\xi \in \prod_{i=1}^{d}\D^{-(\mathcal{Q}\bm{r})_i}[0,T] \Big\}
\\
=\inf\left\{\tilde{I}^{(d)}(\bm\xi): \bm\xi\in \phi^{-1}(\bm\zeta) \right\}.
\] 
\end{theorem}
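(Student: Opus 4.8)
\textbf{Proof strategy for Theorem~\ref{samplepathldpofstochasticnetworks}.}
The plan is to derive the large deviation bounds for $\mathbf{Z}_n = \phi(\mathbf{X}_n)$ by transporting the extended LDP for $\mathbf{X}_n$ (Result~\ref{theorem:multi-d+1-content}) through the map $\phi$, using the extended contraction principle (Lemma~\ref{alternative-yes-and-not-extended-contraction-principle}). The main ingredient that makes this possible is Theorem~\ref{lipschitz-continuity-phi}: the reflection map $\mathbf{R}=(\phi,\psi)$ — and hence its content component $\phi$ — is Lipschitz continuous with respect to $d_p$ when restricted to the product space $\prod_{i=1}^d \D^{\beta_i}[0,T]$ of paths with no downward jumps (plus a linear drift). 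Since $\mathbf{X}_n$ takes values in $\prod_{i=1}^d \D^{-(\mathcal{Q}\mathbf{r})_i}[0,T]$ almost surely (its paths are compound-Poisson inputs, which have only upward jumps by Assumption~\ref{right-tail-W-i}, minus a deterministic drift), the restricted $\phi$ applies, and continuity is exactly the hypothesis needed to push the LDP forward.

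First I would record that $\mathbf{X}_n$ satisfies the extended LDP in $\left(\prod_{i=1}^d \D^{-(\mathcal{Q}\mathbf{r})_i}[0,T], \prod_{i=1}^d \mathcal{T}_{J_1}\right)$ with speed $L(n)n^\alpha$ and rate function $\tilde{I}^{(d)}$, by Result~\ref{theorem:multi-d+1-content}. Next I would invoke Theorem~\ref{lipschitz-continuity-phi} with $\beta_i = -(\mathcal{Q}\mathbf{r})_i$ to get that $\phi: \prod_{i=1}^d \D^{-(\mathcal{Q}\mathbf{r})_i}[0,T] \to \prod_{i=1}^d \D[0,T]$ is Lipschitz, in particular continuous, w.r.t.\ the product $J_1$ metrics on domain and codomain. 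Then the extended contraction principle (Lemma~\ref{alternative-yes-and-not-extended-contraction-principle}) applies directly: it yields that $\mathbf{Z}_n = \phi(\mathbf{X}_n)$ satisfies an extended LDP in $\left(\prod_{i=1}^d \D[0,T], \prod_{i=1}^d \mathcal{T}_{J_1}\right)$ with the same speed and with rate function $I_S(\zeta) = \inf\{\tilde{I}^{(d)}(\xi): \zeta = \phi(\xi),\ \xi \in \prod_{i=1}^d \D^{-(\mathcal{Q}\mathbf{r})_i}[0,T]\}$. Unwinding the definition of the extended LDP then gives exactly statements i) and ii): the upper bound over $F^\epsilon$ with $\epsilon \to 0$ for closed $F$, and the lower bound over $G$ for open $G$.

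There is one technical point to handle with care. The contraction principle as usually stated transports the LDP to the image space $\phi\big(\prod_{i=1}^d \D^{-(\mathcal{Q}\mathbf{r})_i}[0,T]\big)$, whereas the theorem states the bounds for arbitrary closed/open subsets of the full ambient space $\prod_{i=1}^d \D[0,T]$. This is harmless: for the upper bound, a closed set $F$ in the ambient space has $F^\epsilon \supseteq (F \cap \phi(\mathrm{dom}))^\epsilon$, and since $\mathbf{Z}_n$ lives in the image a.s., the probabilities agree; and $I_S \equiv +\infty$ off the image, so the infimum of $I_S$ over $F^\epsilon$ is unaffected by enlarging $F^\epsilon$ beyond the image (one must only check $I_S$ is lower semi-continuous on the ambient space as an extended-real-valued function, which follows from the Lipschitz continuity of $\phi$ and lower semi-continuity of $\tilde I^{(d)}$). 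For the lower bound, an open $G$ in the ambient space intersected with the image is open in the subspace topology, and again the probabilities are unchanged; $\inf_{\xi\in G} I_S(\xi) = \inf_{\xi \in G \cap \phi(\mathrm{dom})} I_S(\xi)$ for the same reason. I would state this observation and cite Lemma~\ref{alternative-yes-and-not-extended-contraction-principle} for the precise form of the extended contraction principle.

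The only real obstacle has already been dealt with upstream — namely establishing the $J_1$-product continuity of $\phi$ on paths without downward jumps (Theorem~\ref{lipschitz-continuity-phi}), which is the delicate part of the whole development because, as noted after that theorem, continuity genuinely fails once downward jumps in multiple coordinates are allowed. Given that result and the extended contraction principle, the proof of Theorem~\ref{samplepathldpofstochasticnetworks} is a short, essentially formal argument; the only thing requiring any attention is the bookkeeping in the previous paragraph about restricting versus not restricting the ambient space and the lower semi-continuity of $I_S$.
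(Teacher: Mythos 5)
Your core approach is exactly the paper's: Lipschitz continuity of $\phi$ (Theorem~\ref{lipschitz-continuity-phi}) combined with the extended contraction principle (Lemma~\ref{alternative-yes-and-not-extended-contraction-principle}~i)) applied to the extended LDP for $\mathbf{X}_n$ (Result~\ref{theorem:multi-d+1-content}). That matches the paper's one-line proof.

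However, you make a false side claim in the ambient-vs.-image bookkeeping: that lower semi-continuity of $I_S$ ``follows from the Lipschitz continuity of $\phi$ and lower semi-continuity of $\tilde I^{(d)}$.'' This does not follow. The standard argument for lower semi-continuity of a contracted rate function requires $\tilde I^{(d)}$ to be a \emph{good} rate function (compact level sets), which it is not in this semi-exponential regime; with only lower semi-continuity of $\tilde I^{(d)}$ and continuity of $\phi$, the infimum over a fibre can genuinely jump. The paper flags this immediately after the theorem: $I_S$ may fail to be lower semi-continuous because $\tilde I^{(d)}$ is not good. That is precisely why Theorem~\ref{samplepathldpofstochasticnetworks} is stated as a pair of one-sided bounds rather than as an extended LDP, and why Lemma~\ref{alternative-yes-and-not-extended-contraction-principle}~i) delivers only bounds without asserting lower semi-continuity of $I'$ (the full extended LDP in part~ii) needs $\Phi$ to be a homeomorphism, which $\phi$ is not). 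Your intermediate wording ``it yields that $\mathbf{Z}_n$ satisfies an extended LDP'' overclaims for the same reason.

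Fortunately, none of this is needed. Lemma~\ref{alternative-yes-and-not-extended-contraction-principle} is already formulated for a map $\Phi:(\mathbb X,d)\to(\mathbb S,\sigma)$ and yields the bounds for arbitrary open and closed subsets of the codomain $\mathbb S$, not merely of the image of $\Phi$. Taking $\mathbb X = \prod_{i=1}^d\D^{-(\mathcal{Q}\mathbf{r})_i}[0,T]$ and $\mathbb S = \prod_{i=1}^d\D[0,T]$ gives the statement directly, with no restriction/extension step and no semicontinuity argument required.
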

Note that $I_{\bm Z}$ may not be lower semi-continuous, because $\tilde I^{(d)}$ is not a good rate function; see \cite{bazhba2020sample} for details.

\begin{proof}
Theorem~\ref{lipschitz-continuity-phi} ensures that $\phi$ is Lipschitz continuous w.r.t.\ $d_p$. 
Therefore, the upper and lower bounds in i) and ii) follow immediately from the extended LDP for $\bm X_n$ (Result~\ref{theorem:multi-d+1-content}) 
and the (Lipschitz) contraction principle   (Lemma~\ref{alternative-yes-and-not-extended-contraction-principle}).
\end{proof}
The function $I_{\bm Z}$ is the solution of a constrained minimization problem over step functions, with a concave objective function, and a constraint that depends on the solution of the Skorokhod problem displayed in Theorem~\ref{samplepathldpofstochasticnetworks}. Though this Skorokhod problem only needs to be evaluated for step functions, this minimization problem is
in general not tractable. To get more concrete results we look at more specific functionals of the buffer content process in subsequent sections.
 
 \section{Asymptotics for overflow probabilities }\label{SFN-overflow-prob}
This section examines the probability that the buffer content associated with a subset of nodes in the system exceeds a high level.
In particular, we fix  \linkdest{nota-bm-b}$\bm{b}=(b_1,\ldots,b_{d}) \in \R_+^{d}$ and 
study the probability of linear combination of the  buffer content at the end of the time horizon exceeding a threshold given by 
$\P(
\rvtxt{(1)-1}{(1)}{\bm{b}^\intercal \bm{Z}_n(T)} \geq y)$. 
Note that for the unscaled process $Z$, this is the probability of congestion at time $nT$.
Let  
\begin{align*}
\linkdest{nota-I-prime}
I'(x) 
\triangleq 
\inf\Big\{\tilde{I}^{(d)}(\bm \xi):  
\rvtxt{(1)-2}{(1)}{
\bm b^\intercal \phi(\bm\xi)(T)
}
=x, \ \bm\xi \in \prod_{i=1}^{d}\D^{-(\mathcal{Q}\bm{r})_i}[0,T]\Big\}
\end{align*}
Define the set
$\linkdest{nota-V_geqslant}V_{\geqslant}(y) \triangleq \{\bm\xi \in \prod_{i=1}^{d} \D^{(\boldsymbol{\gamma}-\mathcal{Q}\bm{r})_i}_{\leqslant \infty}[0,T]: 
\rvtxt{(1)-3}{(1)}{
\bm b^\intercal \phi(\bm\xi)(T)
}
\geq y \},$ and let $V_{\geqslant}^*(y)$ be the optimal value of $\tilde I^{(d)}$ over the set $V_{\geqslant}(y)$; 
i.e.\ \linkdest{nota-V_geqslant^star}$V_{\geqslant}^*(y)\triangleq\inf_{\bm\xi \in V_{\geqslant}(y)}\tilde{I}^{(d)}(\bm\xi)$.
Similarly, let \linkdest{nota-V_>}$V_{>}(y) \triangleq \{\bm\xi \in  \prod_{i=1}^{d} \D^{(\boldsymbol{\gamma}-\mathcal{Q}\bm{r})_i}_{\leqslant \infty}[0,T]: 
\rvtxt{(1)-4}{(1)}{
\bm b^\intercal \phi(\bm\xi)(T)
}
> y \}$ and set  \linkdest{nota-V_>^*}$V_{>}^*(y)\triangleq\inf_{\bm\xi \in V_{>}(y)}\tilde{I}^{(d)}(\bm\xi).$
Note that
$V^*_{\geqslant}(y)$ and $V^*_{>}(y)$ depend on $T$, but we suppress the dependence for notational simplicity.

Recall that $\mathcal{J}$ is the set of nodes with exogenous input. 
Next, let \linkdest{nota-I^+}$I^+\triangleq \{j \in \{1,\ldots,d\}: b_j >0\}$.
The following two lemmas, proven in Section~\ref{proofs-stochastic-networks}, ensure the continuity of $\mathrm{V}^*_{\geqslant}(\cdot)$.
\begin{lemma}\label{convergence-of-quasi-variational-V}
\linktoproof{proof of convergence-of-quasi-variational-V}
Assume that $\mathcal{J} \cap I^+ \neq \emptyset$. The map $x\mapsto V_{\geqslant}^*(x)$ is $\alpha$-Hölder continuous:
$$|V^*_{\geqslant}(y) - V^*_{\geqslant}(x)| \leq \Big(\max_{i \in I^+}\frac{c_i}{b_i^{\alpha}}\Big) \cdot |y-x|^{\alpha}.$$
\end{lemma}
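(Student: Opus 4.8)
The plan is to reduce the Hölder continuity of $V^*_{\geqslant}$ to an explicit perturbation argument on single-jump paths. First I would fix $x,y$ and, without loss of generality, assume $y > x$; the case $y \le x$ is handled by symmetry after exchanging the roles of $x$ and $y$, so it suffices to bound $V^*_{\geqslant}(y) - V^*_{\geqslant}(x)$ from above by the claimed quantity (note $V^*_{\geqslant}$ is nonincreasing in its argument since $V_{\geqslant}(y)\subseteq V_{\geqslant}(x)$, so only one direction of the inequality needs work). Pick an arbitrary $\xi = (\xi^{(1)},\dots,\xi^{(d)}) \in V_{\geqslant}(x)$, so that each $\xi^{(j)}$ with $j\in\mathcal J$ has the form $\xi^{(j)}(t) = a_j\mathbbm 1_{[u_j,T]}(t) + (\boldsymbol\mu - \mathcal Q\mathbf r)_j t$ with a single jump $a_j \ge 0$, while $\xi^{(j)}(t) = -(\mathcal Q\mathbf r)_j t$ for $j\notin\mathcal J$, and $\mathscr B(\xi) = \mathbf b^\intercal \phi(\xi)(T) \ge x$. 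I would then fix an index $i_0 \in \mathcal J \cap I^+$ (nonempty by hypothesis) and construct a competitor $\tilde\xi$ that agrees with $\xi$ in every coordinate except $i_0$, where I enlarge the jump: $\tilde\xi^{(i_0)}(t) = (a_{i_0} + \delta)\mathbbm 1_{[u_{i_0},T]}(t) + (\boldsymbol\mu - \mathcal Q\mathbf r)_{i_0} t$ for a suitable $\delta > 0$ chosen below.

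The key structural input is monotonicity of the reflection map together with the jump-preservation property from Result~\ref{size-discontinuities-of-phi}. Since all coordinates of $\xi$ (hence $\tilde\xi$) have only positive jumps and $\tilde\xi \ge \xi$ in the componentwise partial order, Result~\ref{generic-upperbound-for-psi} gives $\psi(\tilde\xi) \le \psi(\xi)$, and then from $\phi = \xi + \mathcal Q\psi(\xi)$ one needs a monotonicity statement for $\phi$ itself; the cleanest route is to invoke the standard fact (Whitt, Theorem~14.2.6 and its corollaries) that $\xi \mapsto \phi(\xi)$ is also monotone in the componentwise order on $\prod_i \D^\uparrow$-type domains, so $\phi(\tilde\xi)(T) \ge \phi(\xi)(T)$ and hence $\mathscr B(\tilde\xi) \ge \mathscr B(\xi) \ge x$. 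To push the value above $y$, I would use the quantitative lower bound on how much a single enlarged jump propagates: since $i_0 \in I^+$, increasing the input at node $i_0$ by $\delta$ at time $u_{i_0}$ increases $\phi^{(i_0)}$ at time $T$ by at least $\delta$ minus whatever is drained, but in fact the relevant clean estimate is $\mathbf b^\intercal\phi(\tilde\xi)(T) - \mathbf b^\intercal\phi(\xi)(T) \ge b_{i_0}\,\delta$ — this follows because adding $\delta\mathbbm 1_{[u_{i_0},T]}$ to coordinate $i_0$ of the input can only increase $\phi^{(i_0)}(T)$ by at least $\delta$ once we account for the reflection (reflection only pushes content up), while the off-diagonal routing contributions to the other coordinates are nonnegative and $b\ge 0$. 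Choosing $\delta = (y-x)/b_{i_0}$ then yields $\mathscr B(\tilde\xi) \ge y$, so $\tilde\xi \in V_{\geqslant}(y)$, and the cost increases by exactly $c_{i_0}\big((a_{i_0}+\delta)^\alpha - a_{i_0}^\alpha\big) \le c_{i_0}\delta^\alpha = c_{i_0} b_{i_0}^{-\alpha}(y-x)^\alpha$ using subadditivity of $t\mapsto t^\alpha$ for $\alpha\in(0,1)$. Taking the infimum over $\xi\in V_{\geqslant}(x)$ and then minimizing the bound over admissible choices of $i_0$ gives $V^*_{\geqslant}(y) - V^*_{\geqslant}(x) \le \max_{\{i: b_i>0\}} c_i b_i^{-\alpha}\,(y-x)^\alpha$; actually one should pick $i_0$ to minimize $c_{i_0}/b_{i_0}^\alpha$ over $\mathcal J\cap I^+$, but to match the stated inequality it is enough to note the minimum is at most the maximum, and the stated form is the one we want.

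The main obstacle I anticipate is establishing the quantitative propagation inequality $\mathbf b^\intercal\phi(\tilde\xi)(T) - \mathbf b^\intercal\phi(\xi)(T) \ge b_{i_0}\,\delta$ rigorously. Plain monotonicity of $\phi$ only gives $\phi(\tilde\xi)(T)\ge\phi(\xi)(T)$, not a lower bound on the size of the increase, and a naive bound can be destroyed by the reflection draining the extra input at other nodes before time $T$. The right way around this is to compare the two Skorokhod problems directly: write $\tilde\xi = \xi + \delta\,\mathbf e_{i_0}\mathbbm 1_{[u_{i_0},T]}$ and use the representation $\phi(\tilde\xi) = \xi + \delta\mathbf e_{i_0}\mathbbm 1_{[u_{i_0},T]} + \mathcal Q\psi(\tilde\xi)$, $\phi(\xi) = \xi + \mathcal Q\psi(\xi)$, so that $\phi(\tilde\xi)(T) - \phi(\xi)(T) = \delta\mathbf e_{i_0} + \mathcal Q(\psi(\tilde\xi)(T) - \psi(\xi)(T))$. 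With $\Delta\psi \triangleq \psi(\xi)(T)-\psi(\tilde\xi)(T) \ge 0$ (componentwise, by Result~\ref{generic-upperbound-for-psi}), this reads $\phi(\tilde\xi)(T) - \phi(\xi)(T) = \delta\mathbf e_{i_0} - \mathcal Q\,\Delta\psi$; since $\mathcal Q = \mathrm I - Q^\intercal$ and $\Delta\psi\ge0$, we get $\mathbf b^\intercal(\phi(\tilde\xi)(T)-\phi(\xi)(T)) = b_{i_0}\delta - \mathbf b^\intercal(\mathrm I - Q^\intercal)\Delta\psi$, and one must argue $\mathbf b^\intercal(\mathrm I - Q^\intercal)\Delta\psi \le 0$. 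This is where one needs to be a little careful: $(\mathrm I - Q^\intercal)\Delta\psi$ need not have a sign, but the whole expression is controlled because $\phi(\tilde\xi)(T)\ge\phi(\xi)(T)$ componentwise (monotonicity of $\phi$) together with $b\ge 0$ forces $\mathbf b^\intercal(\phi(\tilde\xi)(T)-\phi(\xi)(T))\ge 0$; combined with the identity this gives $\mathbf b^\intercal\mathcal Q\Delta\psi\le b_{i_0}\delta$, which is the wrong direction. The clean fix, which I would adopt, is to sidestep the linear-algebra and instead take $\tilde\xi$ to have a single jump only in coordinate $i_0$ and exploit that $\phi^{(i_0)}$ inherits the full jump at $u_{i_0}$ (by the positive-jumps case of Result~\ref{size-discontinuities-of-phi}), so $\phi^{(i_0)}(\tilde\xi)(u_{i_0}) = \phi^{(i_0)}(\xi)(u_{i_0}) + \delta$, and then use that between $u_{i_0}$ and $T$ the $i_0$-th content decreases at rate at most $r_{i_0}$ in both problems so the gap at time $T$ is still at least $\delta$ provided the $i_0$-buffer does not empty in $[u_{i_0},T]$ in the $\tilde\xi$ problem — and if one instead places the jump at $u_{i_0}=T$ this is automatic, $\phi^{(i_0)}(\tilde\xi)(T) = \phi^{(i_0)}(\xi)(T)+\delta$ exactly. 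Since $V_{\geqslant}(x)$ allows $u_j\in[0,T]$, I may assume the optimizing (or near-optimizing) $\xi$ has its $i_0$-jump at $T$ after a routine reshuffling argument (moving a jump later only decreases $\tilde I^{(d)}$-cost trivially since it doesn't change it, and does not decrease $\phi(\cdot)(T)$), which makes the propagation bound exact and finishes the proof.
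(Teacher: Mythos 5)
The core idea you converge on at the end—place the extra mass at time $T$ so that it passes straight through to $\phi(\cdot)(T)$—is exactly the paper's mechanism, and your earlier linear-algebra detour correctly identifies why the naive $b_{i_0}\delta$ propagation bound is out of reach. However, the route you take to reach the jump-at-$T$ picture contains a genuine gap.

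You propose to first \emph{reshuffle} the near-optimizer so that its existing $i_0$-jump sits at $u_{i_0}=T$, on the grounds that ``moving a jump later\ldots does not decrease $\phi(\cdot)(T)$.'' This is false in a network. Moving the $i_0$-jump later prevents fluid from being processed at node $i_0$ and shipped downstream during $[u_{i_0},T]$; for any node $j$ reachable from $i_0$ through the routing matrix, $\phi^{(j)}(T)$ can strictly decrease. If $b_j>0$ for such a $j$ (for instance a tandem with $b_2$ much larger than $b_1$), the reshuffled path may have $\mathscr B$ strictly below $x$, and adding $\delta=(y-x)/b_{i_0}$ in coordinate $i_0$ is then not enough to reach level $y$. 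Monotonicity under this reshuffling holds only in the $i_0$-th coordinate, not for the linear functional $\mathbf b^\intercal\phi(\cdot)(T)$.

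The paper sidesteps this entirely. It does not move any jump of the near-optimizer $\zeta$; it \emph{adds} a new, separate jump at time $T$, producing $\xi=\zeta+\mathbf v\mathbbm 1_{\{T\}}$, and establishes in Lemma~\ref{construction-of-extra-jump} (using the iterated-map representation $\psi=\lim_n\rho^n_\iota(\mathbf 0)$) that $\psi(\xi)=\psi(\zeta)$ identically, and hence $\phi(\xi)(T)=\phi(\zeta)(T)+\mathbf v$ exactly and the cost rises by at most $\sum_j c_j v_j^\alpha$ by subadditivity. This additive decomposition is what you need and do not have: the old path's reflection dynamics on $[0,T)$ are left untouched, and no comparison between early-jump and late-jump dynamics is required. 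Without some analogue of that lemma, the reshuffling step cannot be made rigorous as stated.
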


\begin{lemma}\label{equality-of-VbVbt}
\linktoproof{proof of equality-of-VbVbt}
Assume that $\mathcal{J} \cap I^+ \neq \emptyset$. It holds that $V^*_{\geqslant}(y)=V^*_{>}(y)$.
\end{lemma}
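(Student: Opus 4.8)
\textbf{Proof plan for Lemma~\ref{equality-of-VbVbt}.}

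The plan is to show the two quantities coincide by squeezing. Since $V_{>}(y)\subseteq V_{\geqslant}(y)$ we have trivially $V^*_{\geqslant}(y)\leq V^*_{>}(y)$, so the work is in the reverse inequality $V^*_{>}(y)\leq V^*_{\geqslant}(y)$. The idea is that any feasible $\xi\in V_{\geqslant}(y)$, i.e.\ one with $\mathscr B(\xi)\geq y$, can be perturbed slightly to strictly increase $\mathscr B$ past $y$ at an arbitrarily small cost in $\tilde I^{(d)}$. Concretely, fix $\eps>0$ and pick $\xi\in V_{\geqslant}(y)$ with $\tilde I^{(d)}(\xi)\leq V^*_{\geqslant}(y)+\eps$. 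Since $\mathcal J\cap I^+\neq\emptyset$, choose $i\in\mathcal J\cap I^+$, so $b_i>0$ and node $i$ carries exogenous input with jump exponent $c_i$. Add a small extra jump of size $\delta>0$ to the $i$-th coordinate of $\xi$ (placing it at time $T$, or merging it with an existing jump in that coordinate), producing $\xi_\delta$.

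The two things to check are that this perturbation (a) keeps the path in the feasible class and strictly increases $\mathscr B$, and (b) costs little. For (b): by the structure of the rate function, adding a jump of size $\delta$ in coordinate $i$ (or enlarging an existing jump $x_i$ to $x_i+\delta$) changes the cost by at most $c_i\delta^\alpha$ (using subadditivity of $t\mapsto t^\alpha$ in the merging case, and exactly $c_i\delta^\alpha$ in the fresh-jump case), so $\tilde I^{(d)}(\xi_\delta)\leq \tilde I^{(d)}(\xi)+c_i\delta^\alpha\leq V^*_{\geqslant}(y)+\eps+c_i\delta^\alpha$; note we must keep $\xi_\delta\in\prod_i\D^{(\boldsymbol\mu-\mathcal Q\mathbf r)_i}_{\leqslant 1}[0,T]$, which forces the added jump to be merged with the (unique, if present) existing jump in coordinate $i$ rather than added as a second jump. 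For (a): by Result~\ref{size-discontinuities-of-phi}, since the coordinates of $\xi$ in $\mathcal J$ have only positive jumps, $\phi$ preserves jump sizes on those coordinates, and more importantly the Skorokhod/reflection map is monotone — increasing the input $\xi$ pointwise increases $\phi(\xi)(T)$ pointwise (this follows from the monotonicity encoded in Result~\ref{generic-upperbound-for-psi} together with $\phi(\xi)=\xi+\mathcal Q\psi(\xi)$, or more directly from Lemma~\ref{reduction-to-one-step-functions}-style monotonicity arguments). Hence $\xi_\delta\geq\xi$ gives $\phi(\xi_\delta)(T)\geq\phi(\xi)(T)$, and in fact the $i$-th coordinate strictly increases by $\delta$ (jump preserved at time $T$), so $\mathscr B(\xi_\delta)=\mathbf b^\intercal\phi(\xi_\delta)(T)\geq \mathbf b^\intercal\phi(\xi)(T)+b_i\delta\geq y+b_i\delta>y$. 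Therefore $\xi_\delta\in V_{>}(y)$ and $V^*_{>}(y)\leq V^*_{\geqslant}(y)+\eps+c_i\delta^\alpha$.

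Finally, let $\delta\to 0$ and then $\eps\to 0$ to conclude $V^*_{>}(y)\leq V^*_{\geqslant}(y)$, which combined with the trivial inequality yields $V^*_{\geqslant}(y)=V^*_{>}(y)$. The main obstacle I anticipate is the bookkeeping in step (a): one has to verify carefully that the reflection map at the terminal time is monotone in the input in the product-$J_1$ restricted-domain setting, and that adding $\delta$ to a coordinate with an already-present jump (to stay within the one-jump class $\D^{\cdots}_{\leqslant 1}$) still produces a strict increase of exactly $b_i\delta$ in $\mathscr B$; this is where Result~\ref{size-discontinuities-of-phi} (jump preservation for positive jumps) and the monotonicity results are doing the real work, and one should make sure the perturbation does not inadvertently create a downward jump or a second jump that would push $\xi_\delta$ out of the admissible class. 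An alternative, possibly cleaner route is to observe that $I'$ is continuous (or at least that $\inf_{x\in[y,\infty)}I'(x)=\inf_{x\in(y,\infty)}I'(x)$) using Lemma~\ref{convergence-of-quasi-variational-V}: the $\alpha$-Hölder continuity of $V^*_{\geqslant}(\cdot)$ from Lemma~\ref{convergence-of-quasi-variational-V} together with the representation \eqref{corollaryV} gives $V^*_{>}(y)=\inf_{x>y}I'(x)=\lim_{x\downarrow y}V^*_{\geqslant}(x)=V^*_{\geqslant}(y)$ directly, bypassing the explicit perturbation entirely.
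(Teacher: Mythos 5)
Your alternative route at the end is exactly the paper's proof. The paper writes, for $\epsilon>0$, that $V_{\geqslant}(y+\epsilon)\subseteq V_{>}(y)\subseteq V_{\geqslant}(y)$ gives
$V^*_{>}(y)-V^*_{\geqslant}(y)\leq V^*_{\geqslant}(y+\epsilon)-V^*_{\geqslant}(y)\leq \max_{i:b_i>0}\tfrac{c_i}{b_i^\alpha}\,\epsilon^\alpha$ by Lemma~\ref{convergence-of-quasi-variational-V}, then lets $\epsilon\to0$. You correctly identified this as the cleaner path, and it is the route the paper takes.

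Your main route, the explicit perturbation, contains exactly the obstacle you flagged and it is not resolved as written. If $i\in\mathcal J\cap I^+$ and $\xi^{(i)}$ already has its (unique) jump at some $u_i<T$, then merging $\delta$ into that jump does \emph{not} yield $\phi^{(i)}(\xi_\delta)(T)\geq \phi^{(i)}(\xi)(T)+\delta$: Result~\ref{size-discontinuities-of-phi} controls only the jump of $\phi$ at $u_i$, not the terminal increment, and the extra $\delta$ of content injected at $u_i$ can partially drain out of node $i$ (and out of downstream nodes) before time $T$. Lemma~\ref{construction-of-extra-jump} works precisely because the added jump sits at the last instant $T$, where the regulator has no time to react ($\psi(\zeta)=\psi(\xi)$), and this is what lets the increment pass to $\phi(\cdot)(T)$ unchanged. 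If instead you keep the added jump at $T$ and accept that $\xi_\delta$ may then lie in $\D_{\leqslant 2}$ rather than $\D_{\leqslant 1}$, the argument can be salvaged by one more application of Lemma~\ref{reduction-to-one-step-functions} (which lowers cost and does not decrease $\phi(\cdot)(T)$, hence preserves $\mathscr B>y$), but that extra step is exactly what the Hölder-continuity route avoids. So: drop the perturbation route and run with your alternative, which is the paper's argument verbatim.
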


We are ready to prove the main result of this Section:
\begin{theorem}\label{LDP-Y-end-of-time-horizon}
For a fixed $\bm{b}=(b_1,\ldots,b_{d}) \in \R_+^{d} $ assume that $\mathcal{J} \cap I^+ \neq \emptyset$. 
The overflow probabilities  $ \P\left( 
\rvtxt{(1)-5}{(1)}
{\bm b^\intercal \bm{Z}_n(T)}
\geq y \right)$ satisfy 
the following logarithmic asymptotics:
\begin{equation}\label{asymptotics-overrflow-probabilities-ZnT}
\lim_{n \to \infty}\frac{1}{L(n)n^{\alpha}}\log \P(
\rvtxt{(1)-6}{(1)}
{\bm b^\intercal \bm{Z}_n(T)}
\geq y)=-V_{\geqslant}^*(y).
\end{equation}
\end{theorem}

\begin{proof}
Note first that from Lemma~\ref{continuity-of-the-projection-map}, 
\rvtxt{(1)-7}{(1)}
{$\bm b^\intercal \bm{Z}_n(T)$}
is a Lipschitz (w.r.t.\ $d_p$) image of $\bm Z_n$. 
Note also that $I'(y) = \inf\{I_{\bm Z}(\xi): \bm b^\intercal \xi(T) = y\}$. 
Therefore, applying Lemma~\ref{alternative-yes-and-not-extended-contraction-principle} \textit{i)} and Theorem~\ref{samplepathldpofstochasticnetworks}, we get the asymptotic upper and lower bounds for $\frac{1}{L(n)n^{\alpha}}\log\P(
\rvtxt{(1)-8}{(1)}
{\bm b^\intercal \bm{Z}_n(T)}
\geq y)$ as follows:
\[
\limsup_{n \to \infty}\frac{1}{L(n)n^{\alpha}}\log\P(
\rvtxt{3-1}{3}
{\bm b^\intercal \bm{Z}_n(T)}
\geq y)  \leq -\lim_{\epsilon \to 0}\inf_{x \in [y-\epsilon, \infty)}I'(x)
\]
and
\begin{align*}
\liminf_{n \to \infty}\frac{1}{L(n)n^{\alpha}}\log\P(
\rvtxt{3-2}{3}
{\bm b^\intercal \bm{Z}_n(T)}
\geq  y)
\geq 
-\inf_{x \in (y, \infty)}I'(x).
\end{align*}
However, from Lemma~\ref{convergence-of-quasi-variational-V} and Lemma~\ref{equality-of-VbVbt},
\begin{align*}
-\lim_{\epsilon \to 0}\inf_{x \in [y-\epsilon, \infty)}I'(x)
&
=
-\lim_{\epsilon \to 0}V^*_{\geqslant}\left(y-\epsilon \right)
= -V^*_{\geqslant}\left(y\right);
\\
-\inf_{x \in (y, \infty)}I'(x)
&
=
-V_{>}^*(y) 
= 
-V_{\geqslant}^*(y).
\end{align*}
That is, the upper and lower bounds for $\limsup$ and $\liminf$ match, and hence, the limit \eqref{asymptotics-overrflow-probabilities-ZnT} exists and equals $-V_{\geqslant}^*(y)$. 
\end{proof}
Note that $V_{\geqslant}^*(y)$ is the solution of an infinite dimensional optimization problem. 
We conjecture that in many problem instances, there exists a $k\geq 1$ (that depends on the specific network) such that $\prod_{i=1}^d \D_{\leqslant k}^{(\bm \gamma - \mathcal Q \bm r)_i}[0,T]$ contains an optimal path that minimizes the rate function $\tilde I(\cdot)$ over $V_{\geqslant}(y)$. 
In such cases, $V_\geqslant^*(y)$ can be computed by solving the following optimization problem.
For given $\bm b\in \R_+^d$ and $y>0$, let $P_{y,k}^*$ denote the optimal value of the following optimization problem:
\begin{equation}\label{Pb}\tag{$P_{y,k}$}
\begin{aligned} 
\inf\quad &\sum_{i=1}^{d} c_i\sum_{j=1}^k\left( x_i^{(j)}\right)^{\alpha} \nonumber 
\\
\text{s.t.} 
\quad 
&
\rvtxt{(1)-9}{(1)}
{\bm b^\intercal \phi(\bm\xi)(T)}
\geq y; 
\nonumber
\\
&
\xi_i = \textstyle\sum_{j=1}^kx_i^{(j)}\mathbbm{1}_{[u_i^{(j)},T]}+(\boldsymbol{\gamma}-\mathcal{Q}\bm{r})_1\cdot e;
\nonumber
\\
&
x_i^{(j)} \geq 0 \ \text{for} \ i \in \mathcal{J},\ j \in \{1,\ldots,k\}, \quad \text{and}\quad x_i^{(j)}=0 \ \text{for} \ i \notin \mathcal{J},\ j \in \{1,\ldots,k\};
\nonumber
\\
&
u_i^{(j)} \in [0,T] \ \text{for} \ i\in\{1,\ldots,d\},\ j\in \{1,\ldots,k\}. \nonumber 
\end{aligned}
\end{equation}
Then, $P^*_{y,k} = V^*_\geqslant(y)$.  
Note that this means that the large deviations rate is the solution of a $2kd$-dimensional optimization problem: the decision variables are the size $x_i^{(j)}$ and the time $u_i^{(j)}$ of the $k$ jumps ($j\in\{1,\ldots,k\}$) in the $d$ coordinates ($i\in\{1,\ldots,d\}$). 
This provides a significant reduction in complexity compared to the general setting of Section \ref{SFN-SP-BCP}. 
Nevertheless, even the finite dimensional problem \eqref{Pb} is still rather intricate: it is an $L^{\alpha}$-norm
minimization problem with $\alpha\in(0,1)$. In general, such problems are strongly NP-hard; see \cite{Ye2011}, for example. In addition, checking whether a solution to \eqref{Pb} is feasible requires one to compute the Skorokhod map $\phi$ for step functions, which is nontrivial.
To get more explicit results and gain some physical insights, we consider a two-node tandem network in the next section, where we can reduce the computation of $V_\geqslant^*(y)$ down to solving \eqref{Pb} with $k=1$. 

\section{A two-node example}
\label{tandem}

We consider a two-node tandem network where content from node 1 flows into node 2, and content from node 2 leaves the system, i.e.\ $q_{12}=1$, and $q_{ij}=0$ otherwise. 
We assume that each node has an exogenous input process (i.e.\ $\mathcal{ J} = \{1,2\}$). We consider the problem of identifying the log-asymptotics of the probability of congestion in the second node, i.e., $\P\left(\bm b^\intercal\bm{Z}_n(T) \geq y\right)$  as $n\to\infty$ where $\bm b = (0,1)$. 
That is, our goal is to compute $V_\geqslant^*(y)$ in this specific example.

The next lemma
enables us to reduce the feasible region of the optimization problem associated with $V^*_\leqslant(y)$ from $\D^{(\boldsymbol{\gamma}-\mathcal{Q}\bm{r})_1}_{\leqslant \infty}[0,T]\times \D^{(\boldsymbol{\gamma}-\mathcal{Q}\bm{r})_2}_{\leqslant \infty}[0,T]$ down to 
$\D^{(\boldsymbol{\gamma}-\mathcal{Q}\bm{r})_1}_{\leqslant 1}[0,T]\times \D^{(\boldsymbol{\gamma}-\mathcal{Q}\bm{r})_2}_{\leqslant 1}[0,T]$.
In other words, we can restrict the class of functions to those that have at most one discontinuity in each coordinate.  

\begin{lemma}\label{reduction-to-one-step-functions}
\linktoproof{proof of reduction-to-one-step-functions}
Consider the two-node tandem network where $d = 2$ and $\mathcal Q = \begin{pmatrix}
1&0\\-1&1
\end{pmatrix}$. 
Let $\bm\xi \in \prod_{i=1}^d\D^{(\boldsymbol{\gamma}-\mathcal{Q}\bm{r})_i}_{\leqslant \infty}[0,T]$. 
Then, there exists a path $\tilde{\bm\xi}\in  
\prod_{i=1}^d\D^{(\boldsymbol{\gamma}-\mathcal{Q}\bm{r})_i}_{\leqslant 1}[0,T]$ such that
\begin{itemize}
\item[i)] $\tilde{I}^{(d)}(\tilde{\bm\xi}) \leq \tilde{I}^{(d)}(\bm\xi)$,
\item[ii)] $\phi(\tilde{\bm\xi})(T) \geq \phi(\bm\xi)(T)$.
 \end{itemize}
\end{lemma}

Lemma~\ref{reduction-to-one-step-functions} implies that computing $V_\geqslant^*(y)$ is equivalent to solving \eqref{Pb} with $k=1$ in case of the two-node tandem networks.
Such computation is the subject of the rest of this section. To keep the presentation concise, we  give an outline of the key steps and focus on physical insight.

We first develop an explicit expression for the buffer content at time $T$ for input processes of the form $\xi_i = (\boldsymbol{\gamma} -\mathcal{Q}\bm{r})_i \cdot e + x_i \mathbbm{1}_{[u_i,T]}, \ t\in [0,T]$, \ $x_{i} \geq 0$, \ $u_i\in [0,T], \ i=1,2$.
To develop physical intuition is it instructive to write the buffer content process at node 2 as the solution of a one-dimensional reflection mapping, fed by the superposition of $\xi_2$ and
the output process of node $1$, which in turn is governed by a one-dimensional reflection mapping as well.
\CR{\\
\[
Q = \begin{pmatrix}
0 & 1 \\ 0 &0
\end{pmatrix}
\quad
\mathcal Q = (I-Q^\intercal) = \begin{pmatrix}
1&0\\-1&1
\end{pmatrix}
\quad
\bm \xi + \mathcal Q \psi(\bm\xi) = \begin{pmatrix}
\xi_1 + \psi_1(\bm\xi)
\\
\xi_2 - \psi_1(\bm\xi) + \psi_2(\bm \xi)
\end{pmatrix}
\]
\[
\psi_1(\bm\xi)(t) = - \inf_{s\leq t}\{ \xi_1(s)\} 
\qquad\implies\qquad 
\phi_1(\bm\xi)(t) = \xi_1(t) - \inf_{s\leq t}\{ \xi_1(s)\}
\] 
\[
(\bm \xi + \mathcal Q \psi(\bm\xi))_2(t) 
=
(\xi_2 - \psi_1(\bm\xi) + \psi_2(\bm \xi))(t)
=
\xi_2(t) + \inf_{s\leq t}\{ \xi_1(s)\} + \psi_2(\bm \xi)(t)
\]
\[
\psi_2(\bm\xi) = -\inf_{s\leq t}\{ \xi_2(s) + \inf_{u\leq s}\{ \xi_1(u)\} \}
\]
\[
\phi_2(\bm\xi)(T)= \xi_2(T)+ \inf_{s\leq T} \xi_1(s) - \inf_{u\leq T}\Big\{\xi_2(u)+ \inf_{s\leq u} \xi_1(s)\Big\}.
\]
}%
To this end, observe that $\psi_1(\bm\xi)(t) = - \inf_{s\leq t}\{0\wedge \xi_1(s)\}$, and $\phi_1(\bm\xi)(t) = \xi_1(t) - \inf_{s\leq t}\{ 0\wedge\xi_1(s)\}$. 
Note also that
$(\bm \xi + \mathcal Q \psi(\bm\xi))_2 
=
\xi_2 - \psi_1(\bm\xi) + \psi_2(\bm \xi)
$, and the minimal $\psi_2(\bm\xi)$ that regulates this process above zero is 
$
\psi_2(\bm\xi)(t)= 
-\inf_{s\leq t}\{ 0\wedge (\xi_2(s) + \inf_{u\leq s}\{ 0\wedge\xi_1(u)\}) \}
$.
Consequently, we can write 
\begin{equation}
\label{skorokhod-node2}
    \phi_2(\bm\xi)(T)= \xi_2(T)+ \inf_{s\leq T} \{0\wedge\xi_1(s)\} - \inf_{u\leq T}\Big\{0\wedge\big\{\xi_2(u)+ \inf_{s\leq u} \{0\wedge\xi_1(s)\}\big\}\Big\}.
\end{equation}
Our goal is to minimize the cost $c_1x_1^\alpha+c_2 x_2^\alpha$ subject to the constraint $\phi_2(\bm\xi)(T)\geq y$, over $x_1 \geq 0,\ x_2 \geq 0,\ u_1 \in [0,T],\ u_2\in [0,T]$. We simplify this problem by identifying convenient choices of $u_1$ and $u_2$ which do not lose optimality.

To this end, observe that a jump of size $x_2$ at time $u_2$ can instead take place at time $u_2=T$ without decreasing $\phi_2(\bm\xi)(T)$. 
To determine a convenient choice of $u_1$, note that a jump of size $x_1$ in node 1 at time $u_1$ causes an outflow of rate $r_1$ from node $1$ to node $2$ in the interval $[u_1, u_1 + x_1/(r_1-\gamma_1)]$, and rate $\gamma_1$ after time 
$u_1 + x_1/(r_1-\gamma_1)$. Therefore, we can take $u_1$ such that $u_1 + x_1/(r_1-\gamma_1)=T$, without decreasing $\phi_2(\bm\xi)(T)$. This choice is feasible as long as $u_1$ remains non-negative, i.e.\ we require that $x_1/(r_1-\gamma_1) \leq T$. Observe that choosing $x_1/(r_1-\gamma_1)>T$ would not be optimal, as it would increase the cost term involving $x_1^\alpha$ without increasing $\phi_2(\bm\xi)(T)$.

We proceed by solving (\ref{skorokhod-node2}) by taking $\xi_1 = (\boldsymbol{\gamma} -\mathcal{Q}\bm{r})_1 \cdot e + x_1 \mathbbm{1}_{[T- x_1/(r_1-\gamma_1),T]}$ and $\xi_2= (\boldsymbol{\gamma} -\mathcal{Q}\bm{r})_2 \cdot e + x_2\mathbbm{1}_{[T,T]}$. 
Straightforward manipulations show that
\begin{equation}
    \phi_2(\bm\xi)(T) = x_2 + (r_1+\gamma_2-r_2)^+ \frac{x_1}{r_1-\gamma_1}.
\end{equation}
We see that a jump at node 1 has no effect on the buffer content in node 2 if $r_2 \geq r_1+\gamma_2$, which is intuitively obvious since node 2 is still rate stable when the output of node $1$ equals $r_1$. 
Therefore, $x_1=0$ and $x_2 = y$ is feasible and minimizes the rate function. 
Our first conclusion is that
\begin{equation}
\lim_{n \to \infty}\frac{1}{L(n)n^{\alpha}}{\log \P\left(\bm b^\intercal\bm{Z}_n(T) \geq y\right)}=-c_2y^{\alpha}, \hspace{1cm} r_2 \geq r_1+\gamma_2.
\end{equation}
We now turn to the more interesting case $r_2 < r_1+\gamma_2$. We do not lose optimality if the constraint on $\phi_2(\bm\xi)(T)$ is tight, so we can impose the constraints
\begin{equation}
\label{polyhedralset}
    x_2 + \frac{r_1+\gamma_2-r_2}{r_1-\gamma_1} x_1 = y, \hspace{0.5cm} x_1 \in [0, (r_1-\gamma_1) T], \hspace{0.5cm} x_2 \geq 0.
\end{equation}
From convex optimization theory, see Corollary~32.3.2 in \cite{rockafellar1970convex}, the minimum of the concave objective function
 $c_1x_1^\alpha+c_2 x_2^\alpha$ subject to the constraints (\ref{polyhedralset}) 
 is achieved over the extreme points of (\ref{polyhedralset}). In our particular situation, this implies that an optimal solution should correspond to one of the following 3 cases: (i)  $x_1=0$, (ii) $x_2=0$, (iii) $x_1= (r_1-\gamma_1) T$. 
In case $(iii)$ we would have $x_2=y-(r_1+\gamma_2-r_2)T$, which is only feasible if $y\geq (r_1+\gamma_2-r_2)T$.  
Note also that if $y = (r_1+\gamma_2-r_2)T$, then (ii) is the case. 
  
Therefore, if $y\leq (r_1+\gamma_2-r_2)T$, we can conclude that either case (i) holds with $x_1=0, x_2=y$, and cost $c_2 y^\alpha$, or case (ii) holds with $x_2=0$, 
$x_1= y\frac{r_1-\gamma_1}{r_1+\gamma_2-r_2}$, and cost $c_1 \left( y\frac{r_1-\gamma_1}{r_1+\gamma_2-r_2}\right)^\alpha$. We conclude that for $y\leq  (r_1+\gamma_2-r_2)T$,
 \begin{equation}
\lim_{n \to \infty}\frac{1}{L(n)n^{\alpha}}{\log \P\left(\bm b^\intercal \bm{Z}_n(T) \geq y\right)}=
-\min \left\{c_1 \left(\frac{r_1-\gamma_1}{r_1+\gamma_2-r_2}\right)^\alpha,  c_2\right\}y^\alpha.
\end{equation}
We now turn to the case $y> (r_1+\gamma_2-r_2)T$. 
In this case, the time horizon $T$ is small w.r.t.\ $y$:  the output of node $1$ alone is never enough to cause the buffer content of node $2$ to reach level $y$ at time $T$. 
Thus, case (ii) can be excluded, and we only have to compare case (i) and case (iii).  Case (i) has solution $x_2=y$ with cost $c_2 y^\alpha$. Case (iii) has solution $x_1= (r_1-\gamma_1) T$, $x_2=y-(r_1+\gamma_2-r_2)T$,
with cost $c_1 ((r_1-\gamma_1) T)^\alpha   + c_2 (y-(r_1+\gamma_2-r_2)T)^\alpha$. We conclude that, if 
$y> (r_1+\gamma_2-r_2)T$,
 \begin{equation}
 \label{examplehardestcase}
\lim_{n \to \infty}\frac{1}{L(n)n^{\alpha}}{\log \P\left(\bm b^\intercal \bm{Z}_n(T) \geq y\right)}=
-\min \{ c_2 y^\alpha ,  c_1 ((r_1-\gamma_1) T)^\alpha + c_2 (y-(r_1+\gamma_2-r_2)T)^\alpha  \}. 
\end{equation}
To give a numerical example, take $y=2, T=1, r_1=r_2=3, \gamma_1=\gamma_2=1$. In this case, the inequality  $y> (r_1+\gamma_2-r_2)T$ holds. To evaluate (\ref{examplehardestcase}), note that the cost of case (i) equals $c_2 2^{\alpha}$ and the cost of case (iii) equals $c_1 2^\alpha + c_2$. 
So we conclude that case (iii) is the most likely way for the event  $\{\bm b^\intercal \bm{Z}_n(1) \geq 2\}$ to occur if $c_1 \leq c_2 (1-2^{-\alpha})$, corresponding to a most likely behavior of two big jumps: $x_1=2$, occuring at node 1 at time 0, and $x_2=1$, occuring at node 2 at time 1.

One may wonder if Lemma~\ref{reduction-to-one-step-functions} can be extended to general stochastic fluid networks so that the computation of $V_\geqslant^*(y)$ can always be reduced to solving $\eqref{Pb}$ with $k=1$.
(This means that their large deviations behaviors are consequences of at most one jump in the external input process to each node.)
Unfortunately, this is not the case. 
We conclude this section with an example for which restricting the number of jumps in each coordinate to at most one is strictly sub-optimal. 


Consider $\alpha = 1/2$, $T = 2$, $y = 2 + \delta\theta$,
$$
\bm{\gamma} 
= 
\begin{pmatrix}
\epsilon\\
0\\
0\\
\end{pmatrix},
\qquad
\bm{r} 
= 
\begin{pmatrix}
4+\epsilon\\
2+\epsilon\\
1+\epsilon\\
\end{pmatrix},
\qquad
Q = 
\begin{pmatrix}
0 & 1 & 0 \\
0 & 0 & 1 \\
0 & 0 & 0 \\
\end{pmatrix},
\qquad
\bm{b} 
= 
\begin{pmatrix}
\delta\\
0\\
1\\
\end{pmatrix},
\qquad
\bm{\gamma} - \mathcal Q \bm{r} 
= 
\begin{pmatrix}
-4\\
2\\
1\\
\end{pmatrix},
$$
where $\epsilon = 0.1$, $\delta < 1/4$, $\theta < 1$, and $c_1 = c_2 = 1$.
Let $\bm\xi$ be the superposition of the fluid limit $(\bm{\gamma} - \mathcal Q\bm{r})\cdot e$ of the potential buffer content vector and two jumps of size 4 and $\theta$ in the first coordinate at the beginning and at the end of the time horizon, respectively. That is, 
$$
\bm\xi(t)
= 
\begin{pmatrix}
-4t + 4\ind_{[0,T]}(t) + \theta \ind_{[T,T]}(t)\\
2t\\
t\\
\end{pmatrix}
.
$$
Then, $\tilde I^{(d)}(\bm\xi) = 2 + \sqrt{\theta}$ and 
$$
\phi(\bm\xi)(T) = 
\begin{pmatrix}
\theta\\
0\\
2\\
\end{pmatrix}
.
$$
However, any $\tilde{\bm\xi}$ (in the effective domain of $\tilde I^{(d)}$) with only one jump in the first coordinate takes the following form:
$$
\tilde{\bm\xi}(t)
= 
\begin{pmatrix}
-4t + x\ind_{[s,T]}(t)\\
2t\\
t\\
\end{pmatrix}
$$
for some $s\in[0,T]$ and $x\in(0,\infty)$. 
Note that if $s>0$, 
\rvtxt{4}{4}
{the third coordinate} 
cannot reach 2. 
Therefore, we see that $s$ has to be zero. 
Now, we see that for $\phi(\tilde{\bm\xi})(T)$ to be greater than $\phi(\bm\xi)(T)$ cooridnate-wise as claimed in ii) of Lemma~\ref{reduction-to-one-step-functions}, $x$ has to be at least $4T + \theta$. 
However, since $\delta < 1$, this means that $\tilde I^{(d)}(\tilde{\bm\xi}) \geq \sqrt{4T + \theta} > \sqrt 4 + \sqrt{\theta} = \tilde I^{(d)}(\bm\xi)$. That is, no $\tilde {\bm\xi}$ with only one jump in the first coordinate satisfies the conclusion of Lemma~\ref{reduction-to-one-step-functions}.
In fact, this system of tandem queues still turns out to be a counterexample even if we change the statement of Lemma~\ref{reduction-to-one-step-functions} so that $ii)$ is  $\bm{b}^\intercal \phi(\tilde\xi)(T) \geq \bm{b}^\intercal \phi(\xi)(T)$. 
To see this, note first that if $x < 4(T-s)$, then $\bm{b}^\intercal\phi(\tilde{\bm\xi})(T) < y$, and hence, we only consider the case $x \geq 4(T-s)$, where
$$
\bm{b}^\intercal \phi(\tilde{\bm\xi})(T) = \delta( x - 4(T-s)) + T-s = \delta x + (1-4\delta)(T-s).
$$
Note also that since we assume $\delta < 1/4$, this is maximized at $s = 0$. 
Therefore, for $\bm{b}^\intercal \phi(\tilde{\bm\xi})(T)$ to be greater than or equal to $y$, we need $x$ to be greater than or equal to $4T + \theta$. 
This implies that $\tilde I^{(d)} (\tilde {\bm\xi}) \geq \sqrt{4T + \theta} > \sqrt 4 + \sqrt \theta = \tilde I^{(d)}({\bm\xi})$.
Therefore, solving $\eqref{Pb}$ with $k=1$ won't give the correct log asymptotics for $\P(\bm{b}^\intercal \phi(\bm X_n)(T)\geq y)$ in general.

\section{Complementary proofs}\label{proofs-stochastic-networks}
 
 \subsection{Proofs of Lemma~\ref{convergence-of-quasi-variational-V} and \ref{equality-of-VbVbt}}
 

Next, we focus on the continuity of $\mathrm{V}^*_{\geqslant}(\cdot)$. Let \linkdest{nota-D-+}$\D_+[0,T]$ be the subspace of $\D[0,T]$ that contains paths with only positive discontinuities: $\D_+[0,T] = \{\xi \in \D[0,T]: \xi(t)-\xi(t-) \geq 0,\ \forall t\in [0,T]\}$. 
Recall that $\D_{\leqslant k}[0,T] = \{\xi\in \D[0,T]: |{Disc}(\xi)| \leq k\}$.

\begin{lemma}\label{construction-of-extra-jump}
Suppose that $\bm a=(a_1,\ldots,a_d) \in \R^{d}_+$, 
$\bm \xi \in \prod_{i=1}^{d}\D_{\leqslant \infty}^{(\bm \gamma - \mathcal Q \bm r)_i}[0,T]$, and $\bm\zeta = \bm\xi + \bm a\mathbbm{1}_{\{T\}}$. 
Then 
\begin{itemize}
\item[i)]  $\psi(\bm\zeta)=\psi(\bm\xi)$, 
\item[ii)]  $\phi(\bm\zeta)(T)=\phi(\bm\xi)(T)+a$, and 
\item[iii)] $\tilde{I}^{(d)}(\bm\zeta) \leq \tilde{I}^{(d)}(\bm\xi)+\sum_{i=1}^{d}c_ia_i^{\alpha}.$
\end{itemize} 
\end{lemma}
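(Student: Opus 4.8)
The plan is to establish the three assertions in the order given, with (i) carrying essentially all of the content and (ii), (iii) being short corollaries. The key preliminary observation is that both $\xi$ and $\zeta$ have only nonnegative jumps: each coordinate $\xi^{(i)}$ has at most one jump, which is nonnegative by assumption, and $\zeta^{(i)} = \xi^{(i)} + a_i \mathbbm 1_{\{T\}}$ only adds a nonnegative jump at the terminal time $T$ (merging with the jump of $\xi^{(i)}$ if the latter already sits at $T$). By Result~\ref{size-discontinuities-of-phi}, this forces both $\psi(\xi)$ and $\psi(\zeta)$ to be \emph{continuous} on $[0,T]$, a fact I would use at the endpoint $T$.

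For (i) I would prove $\psi(\zeta)\le\psi(\xi)$ and $\psi(\zeta)\ge\psi(\xi)$ separately. The first is immediate: since $a\ge 0$ we have $\zeta\ge\xi$ in $\prod_{i=1}^d\D[0,T]$, so Result~\ref{generic-upperbound-for-psi} gives $\psi(\zeta)\le\psi(\xi)$. For the reverse inequality, the idea is to verify directly that $\psi(\zeta)$ is a feasible regulator for $\xi$, i.e.\ $\psi(\zeta)\in\Psi(\xi)$; then $\psi(\xi)=\inf\Psi(\xi)\le\psi(\zeta)$ by the definition of the reflection map. Now $\psi(\zeta)$ lies in $\Psi(\zeta)$ (the infimum in Definition~\ref{definitionofreflectionmap} is attained), hence it is nondecreasing, nonnegative at the origin, and satisfies $\zeta+\mathcal Q\psi(\zeta)\ge 0$ on $[0,T]$. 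Since $\xi$ and $\zeta$ agree on $[0,T)$, this gives $\xi(t)+\mathcal Q\psi(\zeta)(t)\ge 0$ for $t<T$. Letting $t\uparrow T$ and using continuity of $\psi(\zeta)$ at $T$ together with $\xi(t)\to\xi(T-)$ yields $\xi(T-)+\mathcal Q\psi(\zeta)(T)\ge 0$; because each $\xi^{(i)}$ has a nonnegative jump (or none) at $T$, we have $\xi(T)\ge\xi(T-)$ componentwise, so $\xi(T)+\mathcal Q\psi(\zeta)(T)\ge 0$ as well. Thus $\xi+\mathcal Q\psi(\zeta)\ge 0$ on all of $[0,T]$, so $\psi(\zeta)\in\Psi(\xi)$ and (i) follows.

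Claim (ii) is then a one-line computation: $\phi(\zeta)(T)=\zeta(T)+\mathcal Q\psi(\zeta)(T)=\xi(T)+a+\mathcal Q\psi(\xi)(T)=\phi(\xi)(T)+a$, using (i). For (iii), if $\tilde I^{(d)}(\xi)=\infty$ there is nothing to prove, so assume it is finite; then the jump set of $\zeta^{(i)}$ differs from that of $\xi^{(i)}$ only at $T$, where either a new jump of size $a_i$ appears (adding $c_i a_i^\alpha$) or the existing jump of $\xi^{(i)}$, of size $w$ say, is enlarged to $w+a_i$, and subadditivity of $x\mapsto x^\alpha$ on $[0,\infty)$ gives $(w+a_i)^\alpha\le w^\alpha+a_i^\alpha$. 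Summing over the coordinates yields $\tilde I^{(d)}(\zeta)\le\tilde I^{(d)}(\xi)+\sum_{i=1}^d c_i a_i^\alpha$. The main obstacle is the inequality $\psi(\zeta)\ge\psi(\xi)$ in (i): this is exactly where the nonnegativity of the jumps is indispensable — both to obtain continuity of $\psi(\zeta)$ at $T$ and to pass from $\xi(T-)$ back to $\xi(T)$ — since for general c\`adl\`ag inputs, adding mass at the terminal time can genuinely alter the regulator.
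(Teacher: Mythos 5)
Your proof is correct, and part (i) follows a genuinely different route from the paper's. The paper appeals to the fixed-point iteration underlying Theorem~14.2.2 of \cite{whitt2002stochastic}: it shows by induction that the iterates $\rho^k_\zeta(\mathbf 0)$ and $\rho^k_\xi(\mathbf 0)$ coincide on $[0,T-\epsilon]$ for every $k$, deduces $\psi(\zeta)=\psi(\xi)$ on $[0,T)$ since $\epsilon$ is arbitrary, and then extends the identity to $t=T$ using continuity of the regulator (Result~\ref{size-discontinuities-of-phi}). You instead stay at the level of the defining variational characterization: one inequality comes for free from monotonicity (Result~\ref{generic-upperbound-for-psi} with $\zeta\ge\xi$), and the reverse inequality is obtained by checking directly that $\psi(\zeta)\in\Psi(\xi)$, passing to the limit $t\uparrow T$ in $\xi(t)+\mathcal Q\psi(\zeta)(t)\ge 0$ and then using $\xi(T)\ge\xi(T-)$ to close the constraint at the endpoint. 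Both arguments hinge in the same place — the continuity of $\psi(\zeta)$ at $T$, which is exactly where the nonnegativity of the jumps enters, and you rightly flag this as the load-bearing hypothesis. Your variant is somewhat more self-contained, invoking only the high-level properties of the reflection map (monotonicity and attainment of the infimum) rather than the construction of $\psi$ as a uniform limit of iterates; the paper's is a more mechanical verification that avoids the two-sided inequality. Parts (ii) and (iii) are handled the same way in both proofs (a one-line computation and subadditivity of $x\mapsto x^\alpha$, respectively).
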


\begin{proof}
For \textit{i)}, from the proof of the Theorem~14.2.2 in \cite{whitt2002stochastic}, we see that for any $\bm\omega \in \prod_{i=1}^{k}\D[0,T]$ the regulator component $\psi(\bm\omega)$ is the limit (w.r.t.\ $\|\cdot\|$) of $\rho_{\bm\omega}^n(\bm 0)$ where $\bm 0$ is the zero function and $\rho_{\bm\omega}^n$ is the $n$ fold composition of $\rho_{\bm\omega}:\prod_{i=1}^d\D^\uparrow[0,T]\to \prod_{i=1}^d\D^\uparrow[0,T]$ such that $\rho_{\bm\omega}(\bm\eta)(t) = 0 \vee \sup_{s\in[0,t]} \{Q\bm\eta(s) - \bm\omega(s)\}$.
Note that $\rho_{\bm\omega}(\bm\eta)(t)$ depends only on $\bm\eta(s)$ and $\bm\omega(s)$ for $s\in[0,t]$. 
Therefore, $\psi(\bm\omega)(t)$ depends on $\bm\omega(s)$ for $s\in[0,t]$ only.  
Therefore, $\psi(\bm\zeta)(t) = \psi(\bm\xi)(t)$ for $t\in [0,T-\epsilon]$ for any $\epsilon>0$. 
The continuity implies that $\psi(\bm\zeta)(T) = \psi(\bm\xi)(T)$ as well, which concludes the proof of part i). 

For \textit{ii)}, observe that 
$
\phi(\bm\zeta)(T)= \bm\zeta(T)+\mathcal Q \psi(\bm\zeta)(T)=\bm\xi(T)+\bm a+\mathcal Q 
\psi(\bm\xi)(T)=\phi(\bm\xi)(T)+\bm a.
$

For \textit{iii)}, we assume that 
$\xi^{(j)}(t) = -(\mathcal{Q}\bm{r})_{j}(t) 
\ \text{for} \ j \notin \mathcal{J}$ since if not $\tilde I^{(d)} (\bm\zeta) = \tilde I^{(d)}(\bm\xi) = \infty$, and the inequality holds trivially.
Let $\bm\zeta=(\zeta_1,\ldots,\zeta_d)$, and $\bm\xi=(\xi_1,\ldots,\xi_d)$. 
Since the function $x \mapsto x^{\alpha},  \ \alpha \in (0,1)$, is sub-additive, 
\begin{align*}
 I(\zeta_i)
&
=
\sum_{t \in [0,T): \xi_i(t) \neq \xi_i(t-) }\left(\xi_i(t) - \xi_i(t-)\right)^{\alpha}
+\left(\xi_i(T)-\xi_i(T-)+a_i\right)^{\alpha}
\\
&
\leq
 \sum_{t \in [0,T): \xi_i(t) \neq \xi_i(t-) }\left(\xi_i(t) - \xi_i(t-)\right)^{\alpha}
 +\left(\xi_i(T)-\xi_i(T-)\right)^{\alpha}+a_i^{\alpha}
 \\
 &
 =
 I(\xi_i) +a_i^{\alpha}.
\end{align*}
Therefore, $\tilde{I}^{(d)}(\bm\zeta)
=
 \sum_{j\in \mathcal J}c_j I(\zeta_j)
  \leq
    \sum_{j\in\mathcal J} c_jI(\xi_j)+\sum_{j\in \mathcal J}c_ja_j^{\alpha}
    \leq \tilde{I}^{(d)}(\bm\xi)+ \sum_{j=1}^{d}c_ja_j^{\alpha}.$
\end{proof}
 
\linkdest{proof of convergence-of-quasi-variational-V}
\begin{proof}[Proof of Lemma~\ref{convergence-of-quasi-variational-V}]
W.l.o.g., let $y \geq x \geq 0$. 
Then $V_{\geqslant}(y) \subseteq V_{\geqslant}(x)$, and hence,
$V^*_{\geqslant}(y) \geq V^*_{\geqslant}(x) \geq 0.$   
For any $\epsilon>0$, there exists a $\bm\zeta \in V_{\geqslant}(x)$ so that $\tilde{I}^{(d)}(\bm\zeta)< V^*_{\geqslant}(x)+\epsilon$. 
Next, fix $j \in I^+$ and let $\bm\xi = \bm\zeta + \bm{v}\mathbbm{1}_{\{T\}}$ where $\bm{v}
= (0,\ldots,\frac{y-x}{b_j},\ldots,0)$. Due to \textit{ii)} of Lemma~\ref{construction-of-extra-jump}, $$
\bm{b}^\intercal\phi(\bm\xi)(T)
=
\bm{b}^\intercal(\phi(\bm\zeta)(T)+\bm{v}) 
\rvtxt{5}{5}{\ =\ }
\bm{b}^\intercal\phi(\bm\zeta)(T)+b_j\frac{(y-x)}{b_j} 
\geq 
x + y-x
=
y.
$$ Hence,  $\bm\xi \in V_{\geqslant}(y)$.
Due to \textit{iii)} of Lemma~\ref{construction-of-extra-jump}, 
$$
\tilde{I}^{(d)}(\bm\xi) 
\leq 
\tilde{I}^{(d)}(\bm\zeta)+\frac{c_j}{b_j^\alpha}\cdot(y-x)^{\alpha}
\leq 
\tilde{I}^{(d)}(\bm\zeta)+\Big(\max_{i \in I^+}\frac{c_i}{b_i^\alpha}\Big)\cdot(y-x)^{\alpha}.
$$ 
We see that
\[
V^*_{\geqslant}(y) \leq \tilde{I}^{(d)}(\bm\xi) \leq \tilde{I}^{(d)}(\bm\zeta)+\max_{1 \leq i \leq d: b_i >0}\frac{c_i}{b_i^\alpha}(y-x)^{\alpha} 
< V^*_{\geqslant}(x)+\max_{\{1 \leq i \leq d: b_i >0\}}\frac{c_i}{b_i^\alpha}(y-x)^{\alpha}+\epsilon.
\]
This leads to $V^*_{\geqslant}(y)-V^*_{\geqslant}(x) \leq \max_{\{1 \leq i \leq d: b_i >0\}}\frac{c_i}{b_i^\alpha}(y-x)^{\alpha}+\epsilon$.  We obtain the desired result by letting $\epsilon$ tend to $0$.
Thus, $|V^*_{\geqslant}(y) - V^*_{\geqslant}(x)| \leq \max_{\{1 \leq i \leq d: b_i >0\}}\frac{c_i}{b_i^\alpha} \cdot |y-x|^{\alpha}.$
\end{proof} 

We conclude this section with the proof of Lemma~\ref{equality-of-VbVbt}. 

\linkdest{proof of equality-of-VbVbt}
\begin{proof}[Proof of Lemma~\ref{equality-of-VbVbt}]
For any $\epsilon>0$, we have that 
 $V^*_{\geqslant}(y+\epsilon) \geq V^*_{>}(y)$.
Hence, in view of Lemma~\ref{convergence-of-quasi-variational-V},
\[
|V^*_{>}(y)-V^*_{\geqslant}(y)| 
=
V^*_{>}(y)-V^*_{\geqslant}(y)
\leq V^*_{\geqslant}(y+\epsilon)-V^*_{\geqslant}(y)
\leq 
\big(\max_{i \in I^+}\frac{c_i}{b_i^\alpha}\big)\cdot|\epsilon|^{\alpha}.
\]
Now, we let $\epsilon$ go to $0$ to obtain the desired result.
\end{proof}

\subsection{Proof of Lemma~\ref{reduction-to-one-step-functions}}
For any $\eta\in\D[0,T]$, let $\eta^\downarrow \in \D[0,T]$ denote the running infimum \linkdest{nota-one-dimensional-regulator}$\eta^\downarrow (t) \triangleq \inf_{s\in[0,t]}0\wedge \eta(s)$ for all $t\in[0,T]$. 
The following simple lemma is useful for proving Lemma~\ref{reduction-to-one-step-functions}.

\begin{lemma}\label{comparison of one dimensional reflection}
Suppose that $\eta, \omega \in \D[0,T]$ are such that $\eta \geq \omega$ and $\eta(T) = \omega(T)$. 
Then $(\eta -\eta^\downarrow)(T) \leq (\omega -\omega^\downarrow)(T)$.
\end{lemma}
\begin{proof}
Since $\eta^\downarrow \geq \omega^\downarrow$, we have
$
\eta - \eta^\downarrow \leq \eta - \omega^\downarrow. 
$
Therefore, 
$(\eta - \eta^\downarrow)(T) \leq (\eta - \omega^\downarrow)(T)  = (\omega - \omega^\downarrow)(T)$
\end{proof}

Now we prove Lemma~\ref{reduction-to-one-step-functions}. 
\linkdest{proof of reduction-to-one-step-functions}
\begin{proof}[Proof of Lemma~\ref{reduction-to-one-step-functions}]
Since we assume that $\bm \xi = (\xi_1,\xi_2) \in \D^{(\boldsymbol{\gamma}-\mathcal{Q}\bm{r})_1}_{\leqslant \infty}[0,T] \times \D^{(\boldsymbol{\gamma}-\mathcal{Q}\bm{r})_2}_{\leqslant \infty}[0,T]$, 
we can write, 
\rvtxt{(2)-3}{(2)}{%
$\xi_1 = (\gamma_1 - r_1)e + \sum_{j=1}^\infty x^{(j)} \one_{[u^{(j)},T]}$ and $\xi_2 = (\gamma_2 + r_1 - r_2)e + \sum_{j=1}^\infty y^{(j)} \one_{[v^{(j)},T]}$ for $x^{(j)}, y^{(j)}\geq 0$ and $u^{(j)},v^{(j)} \in [0,T]$%
}%
, $j=1,2,\ldots$. 
Consider $\bm\xi' = (\xi_1, \xi_2')$ where $\xi_2' = (\gamma_2 + r_1 - r_2)e + \bar y \one_{[T,T]}$ and 
\rvtxt{(2)-4}{(2)}{%
$\bar y = \sum_{j=1}^\infty y^{(j)}$%
}. 
Then, by the subadditivity of $x\mapsto x^\alpha$, $\tilde I^{(d)}(\bm \xi') \leq \tilde I^{(d)}(\bm\xi)$.

Note that since 
$$
\bm \xi + \mathcal Q \psi(\bm\xi) = \begin{pmatrix}
\xi_1 + \psi_1(\bm\xi)
\\
\xi_2 - \psi_1(\bm\xi) + \psi_2(\bm \xi)
\end{pmatrix}
\qquad \text{and} \qquad
\bm \xi' + \mathcal Q \psi(\bm\xi') = \begin{pmatrix}
\xi_1 + \psi_1(\bm\xi')
\\
\xi_2' - \psi_1(\bm\xi') + \psi_2(\bm \xi')
\end{pmatrix},
$$
we see that $\psi_1(\bm\xi) = \psi_1(\bm\xi')= - \xi_1^\downarrow$, and hence, 
$\phi_1(\bm\xi) = \phi_1(\bm\xi') = \xi_1 - \xi_1^\downarrow$.
Also, 
$\phi_2(\bm\xi) = \xi_2 -\psi_1(\bm\xi) -  (\xi_2 - \psi_1(\bm\xi))^\downarrow$
and 
$\phi_2(\bm\xi') = \xi_2' -\psi_1(\bm\xi) -  (\xi_2' - \psi_1(\bm\xi))^\downarrow$.
Note that since $\xi_2 - \psi_1(\bm\xi) \geq \xi_2' - \psi_1(\bm\xi)$ and $(\xi_2 - \psi_1(\bm\xi))(T) = (\xi_2' - \psi_1(\bm\xi))(T)$, 
Lemma~\ref{comparison of one dimensional reflection} implies that 
$$
\phi_2(\bm\xi)(T) = \xi_2 - \psi_1(\bm\xi) - (\xi_2 - \psi_1(\bm\xi))^\downarrow 
\leq \xi_2 - \psi_1(\bm\xi) - (\xi_2' - \psi_1(\bm\xi))^\downarrow 
=\phi_2(\bm\xi')(T).
$$
Therefore, we found $\bm\xi'\in \D_{\leqslant \infty}^{(\bm\gamma-\mathcal Q \bm r)_1}[0,T] \times \D_{\leqslant 1}^{(\bm\gamma-\mathcal Q \bm r)_2}[0,T] $ such that $\tilde I^{(d)}( \bm\xi') \leq \tilde I^{(d)}(\bm\xi)$ and $\phi(\bm\xi')(T) \geq \phi(\bm\xi)(T)$.
Now, 
let $\bm\xi'' \triangleq (\xi_1', \xi_2')$ where $\xi_1' = (\gamma_1 - r_1)e + \bar x \one_{[T-\frac{\bar x - \rvtxt{(c)}{(c)}{\phi_1(\bm\xi')}
(T)}{r_1-\gamma_1}
\rvtxt{(c)}{(c)}{,T}
]}$ and 
\rvtxt{(2)-5}{(2)}{%
$\bar x = \sum_{j=1}^\infty x^{(j)}$%
}%
.
Note that $\psi_1(\bm\xi'') 
\rvtxt{(c)}{(c)}{\ \geq\ } 
\psi_1(\bm\xi')$ and
$\psi_1(\bm\xi'')(T) = \psi_1(\bm\xi')(T)$%
\rvtxt{(c)-3}{(c)}{.  
To see this, let $T' \triangleq T-\frac{\bar x - \phi_1(\bm\xi')(T)}{r_1-\gamma_1}$.
Note that 
$$
T' 
=
T-\frac{\bar x - \xi_1(T) + \xi_1^\downarrow(T)}{r_1-\gamma_1}
=
T-\frac{\bar x - (\gamma_1-r_1)T - \bar x + \xi_1^\downarrow(T)}{r_1-\gamma_1} 
= -\frac{\xi_1^\downarrow(T)}{r_1-\gamma_1}.
$$
From the construction of $\xi_1'$, it ``attains'' its infimum at $T'-$, and hence,
$
(\xi_1')^\downarrow(t) = \xi_1'(T'-) = T' (\gamma_1-r_1) = \xi_1^\downarrow(T)
$ for $t \in [T', T]$. 
Note also that from the forms of $\xi_1$ and $\xi_1'$, we clearly have $\xi_1'(t) \leq \xi(t)$ for $t\in [0,T']$. 
Therefore, $(\xi_1')^\downarrow \leq \xi_1^\downarrow$ and $(\xi_1')^\downarrow(T) = \xi_1^\downarrow(T)$. 
Since $\psi_1(\bm\xi'') = -(\xi_1')^\downarrow$ and $\psi_1(\bm\xi') = -\xi_1^\downarrow$, we obtain the relationships between $\psi_1(\bm\xi'')$ and $\psi_1(\bm\xi')$ claimed above.
Now}%
, again from Lemma~\ref{comparison of one dimensional reflection}, we get $\phi_2(\bm\xi'')(T)\geq \phi_2(\bm\xi')(T)$.
Note that we constructed $\bm\xi''$ in such a way that $\phi_1(\bm\xi'')(T) = \phi_1(\bm\xi')(T)=\phi_1(\bm\xi)(T)$.
Note also that $\tilde I^{(d)}(\bm\xi'') \leq \tilde I^{(d)}(\bm\xi')$.
We arrive at the conclusion of the lemma by setting $\tilde {\bm \xi} = \bm\xi''$.

\CR{Note that the buffer content of the second queue is completely determined by the locations and lengths of the busy cycles of the first queue (regardless of the shapes).
Within a single busy cycle of the first queue, merging all the jumps (with the first one in the cycle) doesn't change the location and length of the cycle while reducing the value of the rate function, and hence, we can focus on the cases where the busy cycles in the first queue consist of single jumps. 
Note also that merging two busy cycles by shifting the end of the earlier cycle to the beginning of the subsequent one can only increase (or doesn't affect) the buffer content of the second queue at time $T$. 
Since $\xi$ in the effective domain of $\tilde I^{(d)}$ can only have countable jumps and the sum of the jump sizes sum up to a finite number, iterating this procedure produces a $\xi^*$ that has a single jump and optimizes $\tilde I^{(d)}$ due to the continuity of $\phi$.}

\end{proof}

\subsection{Proof of Proposition~\ref{j1-product-uniform-continuity-regulator} and Theorem~\ref{lipschitz-continuity-phi}
}
\label{subsec:Proof of Proposition j1-product-uniform-continuity-regulator}
Recall that $\prod_{i=1}^{d}\D[0,T]$ is the Skorokhod space equipped with the product $J_1$ topology and $\D^{\uparrow}[0,T] \triangleq \{\xi \in \D[0,T]: \xi \  \text{is non-decreasing on $[0,T]$ and $\xi(0) \geq 0$}\}.$ 
$\D^{\uparrow}[0,T]$ is a closed subspace of $\D[0,T]$ w.r.t.\ the $J_1$ topology.
Hence, $\prod_{i=1}^{d}\D^{\uparrow}[0,T]$ is a closed subspace of $\prod_{i=1}^{d}\D[0,T]$ w.r.t.\ the product $J_1$ topology. Since $\D^{\beta}[0,T]$ is the  image of $\D^{\uparrow}[0,T]$ under the homeomorphism $\Upsilon^{\bm\beta}$, we have that $\prod_{i=1}^{d}\D^{\beta_i}[0,T]$
is a closed subset of $\prod_{i=1}^{d}\D[0,T]$.
\subsubsection{Some supporting lemmas}

\begin{lemma}\label{existence-of-almost-odentity-parameterizations}
Suppose that $\lambda, \mu\in \Lambda[0,T]$.
Then, $\|\lambda\circ \mu-e\| \leq \|\lambda - e\| + \|\mu - e\|$.
\end{lemma}
\begin{proof}
$\|\lambda \circ \mu-e\| = \|\lambda  -\mu^{-1}\| \leq \|\lambda-e\| +\| \mu^{-1}-e\|=\|\lambda-e\| +\| e-\mu\|\leq 2\delta$.
\end{proof}
We now consider properties of continuous and increasing time deformations $w_i, \ i=1,\ldots,d$.

\begin{lemma}\label{min-max-time-deformations}
If $w_i\in \Lambda[0,T]$ for each $i=1,\ldots,d$, then $\hat{w}(s)=\min\{w_1(s),\ldots,w_d(s)\}$ and 
$\check{w}(s)=\max\{w_1(s),\ldots,w_d(s)\}$ 
also belong to $\Lambda[0,T]$.
\end{lemma}
\begin{proof}
The $\min$ and $\max$ of continuous and increasing functions are increasing and continuous. The other properties are easily verified.
\end{proof}


Recall that $\psi$ is Lipschitz continuous w.r.t.\ $\|\cdot\|$ (Theorem 14.2.5 of \cite{whitt2002stochastic}). 
Let $K$ denote the Lipschitz constant of $\psi$ w.r.t.\ $\|\cdot\|$, which only depends on $Q$; in paticular, $K$ doesn't depend on $T$. 
\CR{we use (iii) of Theorem 14.2.6 of Whitt throughout this section without mentioning it. Need to mention it at least when used for the first time.}
\begin{lemma}\label{lemma-uniform-supremum-norm}
Let $\rvtxt{(3)-8}{(3)}{\bm \beta} = (\beta_1,\ldots,\beta_d) \in \R^d$ and $\bm\zeta \in \prod_{i=1}^{d}\D^{\beta_i}[0,T]$.
For any $w \in \Lambda[0,T]$, it holds that
 $$\|\psi(\bm\zeta)\circ w-\psi(\bm\zeta)\|< K \|\boldsymbol{\beta}\|_1\cdot \| w- e\|.$$
\end{lemma} 
\begin{proof}
Consider an arbitrary $s \in [0,T]$. 
If $w(s) \geq s$, since $\psi(\bm\zeta)$ is an increasing function, $\psi(\bm\zeta)(w(s)) \geq \psi(\bm\zeta)(s).$ 
Moreover, since $\bm\zeta \in \prod_{i=1}^{d}\D^{\beta_i}[0,T]$, 
$\bm\zeta$ has the following representation: $\bm\zeta(t) = \bm\xi(t)+\boldsymbol{\beta} \cdot t,$ where $\bm\xi \in \prod_{i=1}^d\D^{\uparrow}[0,T]$.
Consequently, for $t > u$, $\bm\zeta(t)-\bm\zeta(u) = \bm\xi(t)-\bm\xi(u)+\boldsymbol{\beta}\cdot(t-u) \geq \boldsymbol{\beta}\cdot(t-u).$ This implies that
\begin{equation}\label{construction-tilde-zeta-1}
\bm\zeta(w(s))=\bm\zeta((w(s)-s)+s) \geq \bm\zeta(s)+\boldsymbol{\beta}\cdot(w(s)-s). 
\end{equation}
Next, consider the path $\tilde{\bm\zeta}_1$ where 
\begin{equation*}
\tilde{\bm\zeta}_1(t)
=
\begin{cases}
\bm\zeta(t), & \ t \in [0,s], \\
\bm\zeta(s)+\boldsymbol{\beta}\cdot(t-s), & \ t \in [s,w(s)].
\end{cases}
\end{equation*}
Since $\tilde{\bm\zeta}_1 \leq \bm\zeta $ over $[0,w(s)]$, Result~\ref{generic-upperbound-for-psi} gives that $\psi(\tilde{\bm\zeta}_1)(w(s)) \geq \psi(\bm\zeta)(w(s))$. Furthermore,
let 
\begin{equation*}
\tilde{\bm\zeta}_2(t)
=
\begin{cases}
\bm\zeta(t),& \ t \in [0,s], \\
\bm\zeta(s),& \ t \in [s,w(s)].
\end{cases}
\end{equation*}
Then we have that $\psi(\tilde{\bm\zeta}_2)(w(s))=\psi(\bm\zeta)(s)$.  
Therefore,
\begin{align}
0\leq 
\psi(\bm\zeta)(w(s))-\psi(\bm\zeta)(s)   
&\leq 
\psi(\tilde{\bm\zeta}_1)(w(s))- \psi(\tilde{\bm\zeta}_2)(w(s))
\leq  
K \sup_{t \in [0,w(s)]}\|\tilde{\bm\zeta}_1(t)-\tilde{\bm\zeta}_2(t)\|_1
\nonumber\\
&\leq
K\|\boldsymbol{\beta}\|_1\cdot |w(s)-s|
\leq 
K\|\boldsymbol{\beta}\|_1\cdot \|w-e\|.
\label{lemma6.4-first-case}
\end{align}
Next, we consider the case $w(s) \leq s$. 
Since $\psi(\bm\zeta)$ is an increasing function, $\psi(\bm\zeta)(s) \geq \psi(\bm\zeta)(w(s))$. 
Furthermore, since $\bm\zeta \in \prod_{i=1}^{d}\D^{\beta_i}[0,T]$, we have that
\begin{equation}\label{construction-tilde-zeta-2}
\bm\zeta(s)=\bm\zeta((s-w(s))+w(s)) \geq \bm\zeta(w(s))+\boldsymbol{\beta}(s-w(s)). 
\end{equation}
Next, consider the path $\tilde{\bm\zeta}_1$, where
\begin{equation*}
\tilde{\bm\zeta}_1(t)
=
\begin{cases}
\bm\zeta(t), & \ t \in [0,w(s)], \\
\bm\zeta(s)+\boldsymbol{\beta}(t-w(s)), & \ t \in [w(s),s].
\end{cases}
\end{equation*}
Since $\tilde{\bm\zeta}_1 \leq \bm\zeta $ over $[0,s]$, Result~\ref{generic-upperbound-for-psi} gives that $\psi(\tilde{\bm\zeta}_1)(s) \geq \psi(\bm\zeta)(s)$. On the other hand,
let 
\begin{equation*}
\tilde{\bm\zeta}_2(t)
=
\begin{cases}
\bm\zeta(t),& \ t \in [0,w(s)], \\
\bm\zeta(s),& \ t \in [w(s),s].
\end{cases}
\end{equation*}
We then have that $\psi(\tilde{\bm\zeta}_2)(s)=\psi(\bm\zeta)(w(s))$.  
Therefore, 
\begin{align}
0
\leq 
\psi(\bm\zeta)(s)-\psi(\bm\zeta)(w(s))  
&
\leq 
\psi(\tilde{\bm\zeta}_1)(s)- \psi(\tilde{\bm\zeta}_2)(s)
\leq  K \sup_{t \in [0,s]}\|\tilde{\bm\zeta}_1(t)-\tilde{\bm\zeta}_2(t)\|_1
\nonumber\\
&
\leq
K\|\boldsymbol{\beta}\|_1\cdot|w(s)-s|
\leq
K\|\boldsymbol{\beta}\|_1\cdot\|w-e\|
.
\label{lemma6.4-second-case}
\end{align}
From \eqref{lemma6.4-first-case} and \eqref{lemma6.4-second-case}, we get (regardless of the value of $w(\cdot)$ at $s$)
$$\|\psi(\bm\zeta)(w(s)) -\psi(\bm\zeta)(s)\|_1 \leq  d K \|\boldsymbol{\beta}\|_1 \cdot\|w-e\| = K \|\boldsymbol{\beta}\|_1 \cdot\| w - e\| .$$
Taking the supremum over $s\in[0,T]$, we arrive at the conclusion of the lemma.
\end{proof}
Note that, if $\boldsymbol{\beta} =\bm 0$ and $\zeta \in \prod_{i=1}^{d}\D^{\beta_i}[0,T]$, then $\zeta$ belongs to $\prod_{i=1}^{d}\D^{\uparrow}[0,T]$ and  is non-negative at the origin. This implies $\psi (\zeta)=0$ and 
the upper bound in Lemma \ref{lemma-uniform-supremum-norm} holds trivially.
Next, we state two more lemmas which are needed in our proof for the Lipschitz continuity of the regulator map in $\prod_{i=1}^{d}\D^{\beta_i}[0,T].$
Let $\iota \in \D[0,T]$ be \linkdest{nota-iota}$\iota(t) \equiv 1$, and \linkdest{nota-bm-iota}$\bm\iota = (\iota,\ldots,\iota) \in \prod_{i=1}^d \D[0,T]$.
 
\begin{lemma}\label{bound-multiple-time-deformations}
Consider $\bm{w}=(w_1,\ldots,w_d)$, each component of which is a time deformation in $\Lambda[0,T]$. Recall $\hat{w}$ and $\check w$ in Lemma~\ref{min-max-time-deformations}.
That is, $\check{w}(t)=\max\{w_1(t),\ldots,w_d(t)\}$, and $\hat{w}(t)=\min\{w_1(t),\ldots,w_d(t)\}$. 
Define the vector valued functions $\bm{\hat w}$, $\bm{\check w}$, and $\bm e$ from $[0,T]$ to $[0,T]^d$ as $\bm{\hat w} \triangleq (\hat w, \ldots, \hat w)$, $\bm{\check w} \triangleq (\check w, \ldots, \check w)$, and \linkdest{nota-bm-e}$\bm e \triangleq (e, \ldots, e)$.
For any $\xi \in \prod_{i=1}^{d}\D^{\beta_i}[0,T]$,
\begin{itemize}
\item[i)] $\psi(\xi_1\circ w_1,\ldots,\xi_d \circ w_d) \leq \psi(\bm\xi)\circ \hat{w}+ (d+1)K\|\boldsymbol{\beta}\|_{\infty}\cdot \|\bm w- \bm e\|\cdot \bm\iota$, and 
\item[ii)] $\psi(\xi_1\circ w_1,\ldots,\xi_d\circ w_d) + (d+1)K\|\boldsymbol{\beta}\|_{\infty}\cdot \|\bm w- \bm e\|\cdot \bm\iota \geq \psi(\bm\xi)\circ\check{w}$.
\end{itemize}
\end{lemma}
\begin{proof}
We start with \textit{i)}. 
Since $\bm\xi \in \prod_{i=1}^{d}\D^{\beta_i}[0,T]$ and $\hat{w}(s) \leq w_i(s)$, we have that for each $i=1,\ldots,d$,
\[
\xi_i(w_i(s)) \geq \xi_i(\hat{w}(s))-\|\boldsymbol{\beta}\|_{\infty}(w_i(s)-\hat{w}(s)), \ s \in [0,T]. 
\]
Note also that since $\big| \hat w (t) - e(t) \big| = \big| w_j (t) - e(t)\big|$ for some $j$,
$$
\|\bm e - \bm{\hat{w}}\| 
= 
\sup_{0\in[0,T]} \sum_{i=1}^d \big| \hat w (t) - e(t)\big|
\leq 
\sup_{0\in[0,T]} \sum_{i=1}^d \sum_{j=1}^d \big| w_j (t) - e(t)\big| = d \sup_{0\in[0,T]}\sum_{j=1}^d \big|w_j(t) - e(t)\big| = d \|\bm w - \bm e\|.
$$
Similarly, $\|\bm{\check{w}} - \bm e\| \leq d \|\bm w - \bm e\|.$
Therefore, due to Result~\ref{generic-upperbound-for-psi} and the Lipschitz continuity of $\psi$ w.r.t.\ $\|\cdot\|$,
\begin{align*}
\psi\big(\xi_1\circ w_1,\ldots,\xi_d\circ w_d\big)
&\leq 
\psi\big(\xi_1\circ\hat{w}-\|\boldsymbol{\beta}\|_{\infty}(w_1-\hat{w}),\ldots,\xi_d\circ\hat{w}-\|\boldsymbol{\beta}\|_{\infty}(w_d-\hat{w})\big)
\\
&
=
\psi\big(\bm\xi\circ\hat{w}-\|\boldsymbol{\beta}\|_{\infty}(\bm{w}-\bm{\hat{w}})\big)
\\
&
\leq 
\psi(\bm\xi\circ\hat{w}) +K\|\boldsymbol{\beta}\|_{\infty}\cdot \|\bm w - \bm{\hat{w}}\|\cdot \bm\iota
\\
&
\leq 
\psi(\bm\xi)\circ\hat{w}+K\|\boldsymbol{\beta}\|_{\infty}\cdot ( \|\bm w - \bm e\| +  \|\bm e - \bm{\hat{w}}\| )\cdot \bm\iota
\\
&
\leq 
\psi(\bm\xi)\circ\hat{w}+(d+1)K\|\boldsymbol{\beta}\|_{\infty}\cdot \|\bm w - \bm e\|\cdot \bm\iota.
\end{align*}
For \textit{ii)}, observe that $ \xi_i(\check{w}(s)) \geq \xi_i({w}_i(s))-\|\boldsymbol{\beta}\|_{\infty}(\check{w}(s)-w_i(s))$ 
for each $i=1,\ldots,d$ and $s \in [0,T]$,
since $\bm\xi \in \prod_{i=1}^{d}\D^{\beta_i}[0,T]$, and $\check{w}(s) \geq w_i(s)$ for each $i=1,\ldots,d$.
Therefore, due to Result~\ref{generic-upperbound-for-psi} and the Lipschitz continuity of $\psi$ w.r.t.\ $\|\cdot\|$,
\begin{align*}
\psi(\bm\xi)\circ \check{w}
&
=
\psi\big(\xi_1\circ \check{w},\ldots,\xi_d\circ \check{w}\big)
\\
&
\leq 
\psi\big(\xi_1\circ{w_1}-\|\boldsymbol{\beta}\|_{\infty}(\check{w}-w_1),\ldots,\xi_d\circ{w_d}-\|\boldsymbol{\beta}\|_{\infty}(\check{w}-w_d)\big)
\\
&
=
\psi\big((\xi_1\circ w_1,\ldots,\xi_d\circ w_d)-\|\boldsymbol{\beta}\|_{\infty}(\bm{\check{w}}-\bm w)\big)
\\
&
\leq 
\psi(\xi_1\circ w_1,\ldots,\xi_d\circ w_d)+K\|\boldsymbol{\beta}\|_{\infty}\cdot \|\bm{\check{w}}-\bm w\|\cdot \bm\iota
\\
&
\leq
\psi(\xi_1\circ w_1,\ldots,\xi_d\circ w_d)+K\|\boldsymbol{\beta}\|_{\infty}\cdot (\|\bm{\check{w}}-\bm e\| + \|\bm e - \bm{w}\|)\cdot \bm\iota
\\
&
=
\psi(\xi_1\circ w_1,\ldots,\xi_d\circ w_d)+(d+1) K\|\boldsymbol{\beta}\|_{\infty}\cdot \|\bm w - \bm e\|\cdot \bm\iota.
\end{align*} 
\end{proof}

 \begin{lemma}\label{the-desired-Lip-bound}
  For any $\bm\xi \in \prod_{i=1}^{d}\D^{\beta_i}[0,T]$ and $\bm{w}=(w_1,\ldots,w_d) \in \prod_{i=1}^d \Lambda[0,T]$, 
 $$
\|\psi(\xi_1\circ w_1,\ldots,\xi_d\circ w_d)-\psi(\bm\xi)\| \leq d(2d+1)K\|\boldsymbol{\beta}\|_1 \cdot \|\bm w-\bm e\|.
  $$
 \end{lemma}

\begin{proof}
Due to Lemma~\ref{lemma-uniform-supremum-norm}, Lemma~\ref{bound-multiple-time-deformations}, and $\|\hat w - e\| \leq \|\bm w - \bm e\|$,
\begin{align*}
\psi(\xi_1\circ w_1,\ldots,\xi_d\circ w_d)-\psi(\bm\xi) 
&\leq 
\psi(\bm\xi)\circ\hat{w}-\psi(\bm\xi) + (d+1)K\|\boldsymbol{\beta}\|_{\infty}\cdot \|\bm w-\bm e\|\cdot \bm\iota
\\
&
\leq  
dK\|\boldsymbol{\beta}\|_1\cdot \|\hat w- e\|\cdot \bm\iota+(d+1)K\|\boldsymbol{\beta}\|_{\infty}\cdot\|\bm w-\bm e\|\cdot \bm\iota
\\
&
\leq
(2d+1)K\|\boldsymbol{\beta}\|_1 \cdot \|\bm w-\bm e\|\cdot \bm\iota.
\end{align*}
Similarly,
\begin{align*}
\psi(\bm\xi)-\psi(\xi_1\circ w_1,\ldots,\xi_d\circ w_d)
&\leq
\psi(\bm\xi)-\psi(\bm\xi)(\check{w})+(d+1)K\|\boldsymbol{\beta}\|_{\infty}\cdot \|\bm w-\bm e\|\cdot \bm\iota
\\
&\leq 
dK \|\bm \beta\|_1 \cdot \|\check w - e\|\cdot \bm\iota+(d+1)K\|\boldsymbol{\beta}\|_{\infty}\cdot \|\bm w-\bm e\|\cdot \bm\iota
\\
&\leq
(2d+1)K\|\boldsymbol{\beta}\|_1\cdot \|\bm w-\bm e\|\cdot \bm\iota.
\end{align*}
From these, the conclusion of the lemma follows.
\end{proof}

\subsubsection{Lipschitz continuity of the reflection map} 
 
Now, we are ready to conclude Section~\ref{subsec:Proof of Proposition j1-product-uniform-continuity-regulator} with the proofs of Proposition~\ref{j1-product-uniform-continuity-regulator} and Theorem~\ref{lipschitz-continuity-phi}, which are the Lipschitz continuity of the regulator map and the buffer content component map, respectively, in the product $J_1$ topology. 
We start with the Lipschitz continuity of the regulator map $\psi$.
 
\linkdest{proof of j1-product-uniform-continuity-regulator} 
\begin{proof}[Proof of Proposition~\ref{j1-product-uniform-continuity-regulator}]
Given $\bm\xi, \bm\zeta\in \prod_{i=1}^d \D^{\beta_i}[0,T]$, consider an arbitrary $\delta$ such that $d_p(\bm\xi,\bm\zeta)<\delta$. 
Then, there exists $\lambda_i\in \Lambda[0,T]$ such that $\|\xi_i\circ \lambda_i -\zeta_i \| \vee \|\lambda_i-e\| < \delta$ for each $i=1,\ldots,d$.  
Notice that
\begin{align}
d_p(\psi(\bm\xi),\psi(\bm\zeta)) 
& 
\leq  
\sum_{i=1}^{d}\inf_{ w_i \in \Lambda[0,T]}\big\|\psi_i(\bm\xi)\circ w_i -\psi_i(\bm\zeta)\big\| \vee \big\|w_i-e\big\|  \nonumber
\\
&\leq 
\sum_{i=1}^{d}\big\|\psi_i(\bm\xi)\circ \lambda_i -\psi_i(\bm\zeta)\big\| \vee \big\| \lambda_i - e \big\| \nonumber
\\
&
\leq
\sum_{i=1}^{d}\big\|\psi_i(\bm\xi)\circ \lambda_i -\psi_i(\xi_1\circ \lambda_1,\ldots,\xi_d\circ \lambda_d)\big\|\vee \big\| \lambda_i - e \big\| \nonumber
\\
&
\qquad\qquad
+
\sum_{i=1}^{d}\big\|\psi_i(\xi_1\circ \lambda_1,\ldots,\xi_d\circ \lambda_d)-\psi_i(\zeta_1,\ldots,\zeta_d)\big\|\vee \big\| \lambda_i - e \big\| \nonumber
\end{align}
Note that from Lemma~\ref{the-desired-Lip-bound},
\begin{align*}
&
\big\|\psi_i(\bm\xi)\circ \lambda_i -\psi_i(\xi_1\circ \lambda_1,\ldots,\xi_d\circ \lambda_d)\big\|
\\
&
=
\big\|\psi_i(\bm\xi) -\psi_i(\xi_1\circ \lambda_1,\ldots,\xi_d\circ \lambda_d)\circ  (\lambda_i)^{-1}\big\|
\\
&=
\big\|\psi_i(\bm\xi) -\psi_i(\xi_1\circ \lambda_1\circ  (\lambda_i)^{-1},\ldots,\xi_d\circ \lambda_d\circ  (\lambda_i)^{-1})\big\|
\\
&\leq
d(2d+1)K\|\bm\beta\|_1\cdot \big\|\bm e -\big(\lambda_1\circ  (\lambda_i)^{-1},\ldots,\lambda_d\circ  (\lambda_i)^{-1}\big)\big\|
\\
&\leq
d(2d+1)K\|\bm\beta\|_1\cdot \sum_{j=1}^d\big\|e -\lambda_j\circ  (\lambda_i)^{-1}\big\|
\\
&\leq
d(2d+1)K\|\bm\beta\|_1\cdot \sum_{j=1}^d\big(\big\|e -\lambda_j\big\|+\big\|e - (\lambda_i)^{-1}\big\|\big)
\\
&=
d(2d+1)K\|\bm\beta\|_1\cdot \sum_{j=1}^d\big(\big\|e -\lambda_j\big\|+\big\|e - \lambda_i\big\|\big)
\\
&\leq 
2d^2(2d+1)K\|\bm\beta\|_1\cdot \delta,
\end{align*}
where the third inequality is due to Lemma~\ref{existence-of-almost-odentity-parameterizations}.
On the other hand,
\begin{align*}
\big\|\psi_i(\xi_1\circ \lambda_1,\ldots,\xi_d\circ \lambda_d)-\psi_i(\zeta_1,\ldots,\zeta_d)\big\|
&\leq
\big\|\psi(\xi_1\circ \lambda_1,\ldots,\xi_d\circ \lambda_d)-\psi(\zeta_1,\ldots,\zeta_d)\big\|
\\
&\leq
K\big\|(\xi_1\circ \lambda_1,\ldots,\xi_d\circ \lambda_d)-(\zeta_1,\ldots,\zeta_d)\big\|
\\
&=
K\sum_{i=1}^d\big\|\xi_i\circ \lambda_i-\zeta_i\big\|
\leq Kd\delta.
\end{align*}
Therefore,
\begin{align}
\label{upper-bound-d-product}
d_p(\psi(\bm\xi),\psi(\bm\zeta)) 
&
\leq 
d(2d^2(2d+1)K\|\bm\beta\|_1 + Kd\vee 1)\cdot \delta
\end{align}
Letting $\delta \downarrow d_p(\bm\xi,\bm\zeta)$ we obtain Lipschitz continuity of $\psi$ w.r.t. $d_p$.
\end{proof}

\begin{proof}[Proof of Theorem~\ref{lipschitz-continuity-phi}]
The Lipschitz continuity of the regulator map has been proven in Proposition~\ref{j1-product-uniform-continuity-regulator}. We only need to verify the Lipschitz continuity of the buffer content component map $\phi$.
Let  $\delta$ be such that $d_{p}(\bm\xi,\bm\zeta)< \delta$. 
Then, 
there exists $\lambda_i\in\Lambda[0,T]$ such that $\|\xi_i\circ \lambda_i - \zeta_i\| \vee \|\lambda_i - e\| \leq \delta$ for each $i=1,\ldots,d$. 
Note that $\phi_i(\bm\xi)=\xi_i+\psi_i(\bm\xi)-\sum_{j \in \{1,\ldots,d\} \setminus \{i\}}q_{ji}\psi_j(\bm\xi)$. Hence, 
\begin{align*}
&
d_{J_1}(\phi_i(\bm\xi),\phi_i(\bm\zeta)) 
\\
&
= \textstyle
d_{J_1}\big(\xi_i + \psi_i(\bm\xi)-\sum_{j \in \{1,\ldots,d\} \setminus \{i\}}q_{ji}\psi_j(\bm\xi),\ 
\zeta_i+\psi_i(\bm\zeta)-\sum_{j \in \{1,\ldots,d\} \setminus \{i\}}q_{ji}\psi_j(\bm\xi)\big) 
\\
&
\textstyle
\leq 
\big\| \xi_i\circ \lambda_i + \psi_i(\bm\xi)\circ \lambda_i-\sum_{j \in \{1,\ldots,d\} \setminus \{i\}}q_{ji}\psi_j(\bm\xi)\circ \lambda_i-
\zeta_i-\psi_i(\bm\zeta)+\sum_{j \in \{1,\ldots,d\} \setminus \{i\}}q_{ji}\psi_j(\bm\xi)\big\| \vee \big\| \lambda_i - e\big\|
\\
&
\textstyle
\leq
\big\| \xi_i\circ \lambda_i - \zeta_i\big\| \vee \delta
+ \big\|\psi_i(\bm\xi)\circ \lambda_i - \psi_i(\bm\zeta)\big\| \vee \delta
+\sum_{j \in \{1,\ldots,d\} \setminus \{i\}}
\|\psi_j(\bm\xi)\circ \lambda_i-\psi_j(\bm\xi)\big\|  \vee \delta
\end{align*}
Note that $\big\| \xi_i\circ \lambda_i - \zeta_i\big\|\leq \delta$ and 
$\|\psi_j(\bm\xi)\circ \lambda_i-\psi_j(\bm\xi)\big\|$ can be bounded by 
$2d^2(2d+1)K\|\bm\beta\|_1\cdot\delta$ the say way as in the proof of Proposition~\ref{j1-product-uniform-continuity-regulator}.
Since $d_{p}(\phi(\bm\xi),\phi(\bm\zeta)) \leq \sum_{i=1}^{d} d_{J_1}(\phi_i(\bm\xi),\phi_i(\bm\zeta))$, we have that $\phi$ is Lipschitz continuous in $\prod_{i=1}^{d}\D^{\beta_i}[0,T]$ by letting $\delta \downarrow d_p(\bm\xi,\bm\zeta)$.
\end{proof}
\appendix

\section{Continuity  of some useful functions}
\label{appendix-continuity}
In this appendix, we include the proofs of continuity of some  functions in the product $J_1$ topology. Recall the function $\Upsilon^{\bm\beta}: \prod_{i=1}^{d}\D[0,T] \to \prod_{i=1}^{d}\D[0,T]$ where $\Upsilon^{\bm\beta}(\bm\xi)(t)=\bm\xi(t)+\bm\beta \cdot t$ for $t \in [0,T]$.

\linkdest{proof of Continuity-Upsilon-k-multi}
 \begin{proof}[Proof of Lemma~\ref{Continuity-Upsilon-k-multi}.]
For \textit{i)}, suppose that $\bm\xi$ and $\bm\zeta$ are given.
For each  $i \in \{1,\ldots,d\}$, let $\lambda_i$ be a homeomorphism such that $ \|\xi_i-\zeta_i \circ \lambda_i\| \vee \|\lambda_i-e\| < 2\cdot d_{J_1}(\xi_i, \zeta_i).$
Then, 
\begin{align}
d_{J_1}\left(\Upsilon^{\bm\beta}_i(\bm\xi),\Upsilon^{\bm\beta}_i(\bm\zeta)\right)
&
\leq 
\| \Upsilon^{\bm\beta}_i(\bm\xi) - \Upsilon^{\bm\beta}_i(\bm\zeta)\circ\lambda_i \|\vee \|\lambda_i - e \|
\\
&
= \|\xi_i-\zeta_i\circ \lambda_i-\beta_i(\lambda_i-e)\| \vee \|\lambda_i-e\| \nonumber
\\
&
\leq \|\xi_i-\zeta_i\circ \lambda_i\| \vee \|\lambda_i-e\| + \|\beta_i(\lambda_i-e)\| \vee \|\lambda_i-e\| \nonumber
\\
&
\leq 
2(1+1\vee |\beta_i|) \cdot d_{J_1}(\xi_i, \zeta_i).
\end{align} 
Consequently,  
\begin{align*}
d_p(\Upsilon^{\beta}(\bm\zeta),\Upsilon^{\beta}(\bm\xi)) 
&
= 
\sum_{i=1}^{d}d_{J_1}(\Upsilon_i^{\bm\beta}(\bm\zeta),\Upsilon_i^{\bm\beta}(\bm\xi))
\leq 
\sum_{i=1}^{d}2(1+1\vee |\beta_i|) \cdot d_{J_1}(\xi_i, \zeta_i)
\\
&\leq 
2(1+1\vee\|\bm\beta\|_1) \cdot d_p(\bm\xi,\bm\zeta).
\end{align*}

For \textit{ii)}, note that  $(\Upsilon^{\bm\beta})^{-1}(\bm\zeta)=\bm\zeta-\bm\beta \cdot e =\Upsilon^{-\bm\beta}(\bm\zeta)$, and hence, $\Upsilon^{\bm\beta}$ is injective and surjective. 
From this, the continuity of $(\Upsilon^{\bm\beta})^{-1}$ is also an immediate result of \textit{i)}.
\end{proof}

Finally, we prove that the  projection map is Lipschitz continuous in the product $J_1$ topology.

\linkdest{proof of continuity-of-the-projection-map}
\begin{proof}[Proof of Lemma~\ref{continuity-of-the-projection-map}.]
Let $\bm\xi, \bm\zeta \in \prod_{i=1}^{d}\D[0,T]$ be given.
Note first that 
\[
|\xi_i(T)-\zeta_i(T)|
= 
|\xi_i(T)-\zeta_i(\lambda(T))|
\leq \|\xi_i-\zeta_i\circ\lambda\|
\]
for any $\lambda \in \Lambda[0,T]$ since $\lambda(T) = T$.
Taking infimum over all $\lambda \in \Lambda[0,T]$, we see that $|\xi_i(T)-\zeta_i(T)| \leq d_{J_1}(\xi_i, \zeta_i)$.
Therefore,
\[
|\bm b^\intercal \bm\xi(T) - \bm b^\intercal \bm\zeta(T)| 
\leq \sum_{i=1}^d |b_i| \cdot |\xi_i(T) - \zeta_i(T)| 
\leq \sum_{i=1}^d |b_i| \cdot d_{J_1}(\xi_i, \zeta_i)
\leq \|\bm b\|_1 \cdot d_p(\bm\xi, \bm\zeta).
\]
 \end{proof}

\section{Some useful tools on large deviations}

In this appendix, we include results that facilitate the use of the extended LDP. Given that the probability measures of $(X_n)$ satisfy the extended LDP in a metric space $\left(\mathcal{X},d\right)$, our results include the derivation of the extended LDP in closed subspaces of $\mathcal{X}$, and a variation of the contraction principle for Lipschitz continuous maps. Let $\mathcal{D}_I \triangleq \{x \in X: I(x)<\infty\}$ denote the effective domain of $I$.

\begin{lemma}\label{E-LDP-on-subspaces-full-measure}
Let $E$ be a closed subset of $\mathcal{X}$ and let $X_n$ be such that $\P(X_n \in E)=1$ for
all $n \geq 1$. Suppose that $E$ is equipped with the topology induced by $\mathcal{X}$. Then,
if the probability measures of $(X_n)$ satisfy the extended LDP in $(\mathcal{X},d)$ with speed $a_n$, and with rate function $I$ so that $\mathcal{D}_I \subseteq E$, then
the same extended LDP holds in $E$. 
\end{lemma}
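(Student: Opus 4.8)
The plan is to transfer each of the three inequalities in the definition of the extended LDP from $\mathcal{X}$ to $E$, using the hypothesis $\mathcal{D}_I\subset E$ for the upper bound and the hypothesis $\P(X_n\in E)=1$ for the lower bound. Throughout one keeps in mind that the metric on $E$ is the restriction of $d$, so that for a measurable set $A\subseteq E$ the $\epsilon$-enlargement computed inside $E$ equals $A^{\epsilon}\cap E$, where $A^{\epsilon}=\{\xi\in\mathcal{X}:d(\xi,A)\leq\epsilon\}$, and the interior of $A$ inside $E$, which I denote $\mathrm{int}_E(A)$, contains the interior inside $\mathcal{X}$. Since $E$ is closed (hence Borel in $\mathcal{X}$), a set $A$ measurable in $E$ is measurable in $\mathcal{X}$, so the extended LDP in $\mathcal{X}$ applies to it, and the restriction $I|_E$ is again non-negative and lower semi-continuous.

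For the upper bound, fix measurable $A\subseteq E$ and apply the upper bound of the extended LDP in $\mathcal{X}$ to $A$, giving $\limsup_{n\to\infty}\frac{\log\P(X_n\in A)}{a_n}\leq-\lim_{\epsilon\to0}\inf_{\xi\in A^{\epsilon}}I(\xi)$. Because $\mathcal{D}_I\subset E$, the set $A^{\epsilon}\setminus E$ is disjoint from $\mathcal{D}_I$, so $I\equiv\infty$ there; hence $\inf_{\xi\in A^{\epsilon}}I(\xi)=\inf_{\xi\in A^{\epsilon}\cap E}I(\xi)$ for every $\epsilon>0$ (with the convention $\inf\emptyset=\infty$). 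Taking $\epsilon\downarrow 0$ yields exactly the upper bound required for the extended LDP in $E$.

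For the lower bound, let $A\subseteq E$ be measurable and let $x\in\mathrm{int}_E(A)$ with $I(x)<\infty$ (if no such $x$ exists the asserted lower bound is $-\infty$ and there is nothing to prove). Pick $r>0$ with $\{\xi\in E:d(\xi,x)<r\}\subseteq A$; then $B\triangleq\{\xi\in\mathcal{X}:d(\xi,x)<r\}$ is open in $\mathcal{X}$ and $B\cap E\subseteq A$, so using $\P(X_n\in E)=1$ we obtain $\P(X_n\in A)\geq\P(X_n\in B\cap E)=\P(X_n\in B)$. Applying the lower bound of the extended LDP in $\mathcal{X}$ to the open set $B$ gives
\[
\liminf_{n\to\infty}\frac{\log\P(X_n\in A)}{a_n}\geq\liminf_{n\to\infty}\frac{\log\P(X_n\in B)}{a_n}\geq-\inf_{\xi\in B}I(\xi)\geq-I(x).
\]
Taking the supremum over all admissible $x$ yields $\liminf_{n\to\infty}\frac{\log\P(X_n\in A)}{a_n}\geq-\inf_{\xi\in\mathrm{int}_E(A)}I(\xi)$, the required lower bound; the middle inequality $\liminf\leq\limsup$ is automatic.

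I expect the main obstacle to be conceptual rather than computational: one cannot simply invoke a contraction-type argument, since $\mathrm{int}_E(A)$ is in general strictly larger than the interior of $A$ in $\mathcal{X}$, so applying the $\mathcal{X}$-LDP lower bound verbatim to $A$ is too weak. The resolution---intersecting the witnessing open ball with $E$ and exploiting $\P(X_n\in E)=1$ to recover the $E$-interior---is the one genuinely new idea, and it is worth noting that the two hypotheses play complementary roles: $\mathcal{D}_I\subset E$ drives the upper bound while $\P(X_n\in E)=1$ drives the lower bound. The only routine point to check is that measurability and lower semi-continuity survive restriction to the subspace $E$, which is where closedness of $E$ is used.
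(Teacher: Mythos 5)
Your proof is correct and follows essentially the same route as the paper: both use $\mathcal{D}_I \subset E$ to show that the infimum over $A^\epsilon$ equals the infimum over $A^\epsilon \cap E$ in the upper bound, and both use $\P(X_n \in E)=1$ to replace an $E$-open set by an $\mathcal{X}$-open set with the same probability in the lower bound. The only cosmetic difference is that you handle general measurable sets directly (building the $\mathcal{X}$-open witness ball by ball around points of the $E$-interior), whereas the paper implicitly reduces to the case of closed/open sets and writes the $E$-open set once as $G' \cap E$; the underlying argument is identical.
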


\begin{proof}
Suppose that an extended LDP holds in $\mathcal{X}$.
For the upper bound, let $F$ be a closed subset of $E$ so that $F = F'\cap E$ for some $F'$ that is a closed subset of $\mathcal X$. 
Then, $F$ is a closed subset of $\mathcal{X}$. Hence,
$
 \limsup_{n \to \infty}\frac{1}{a_n}\log\P\left(X_n \in F \right) \leq -\inf_{x \in F^{\epsilon}} I(x)= -\inf_{x \in F^{\epsilon} \cap E} I(x).
$
Next, for the lower bound, let $G$ be an open subset of $E$. That is, $G=G' \cap E$, where $G'$ is an open subset of $\mathcal{X}$. Then,
\rvtxt{6}{6}
{
\begin{align*}
&
\liminf_{n \to \infty}\frac{1}{a_n}\log\P\left(X_n \in G \right) 
=
\liminf_{n \to \infty}\frac{1}{a_n}\log\P\left(X_n \in G'\right)
\geq 
-\inf_{
{x \in G'}
} I(x)
= -\inf_{x \in G} I(x).
\end{align*}
}
The level sets $\Psi_I(\alpha) \subseteq \mathcal{X}$ are closed, so $I$ restricted to $E$ remains lower semicontinuous.
 \end{proof}

We continue with a useful lemma on pre-images of Lipschitz continuous maps on metric spaces. 
 
\begin{lemma}\label{pre-images-lipschitz-maps}
Let $(\mathbb S, \sigma)$ and $(\mathbb{X}, d)$ be  metric spaces. Suppose that   $\mathrm{\Phi}: \left( \mathbb{X},d\right) \to \left(\mathbb S,\sigma \right)$ is a Lipschitz continuous mapping with Lipschitz constant $\|\mathrm{\Phi}\|_\lip$. Then, for any set $F \subset \mathbb {S}$, 
 it holds that 
\[
\left(\mathrm{\Phi}^{-1}(F)\right)^{\epsilon} \subseteq \mathrm{\Phi}^{-1}\left(F^{\,\epsilon\cdot\|\mathrm{\Phi}\|_\lip }\right).
\]
\end{lemma}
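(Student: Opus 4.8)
The plan is a direct point-chase through the definitions, so I will keep it brief. First a small remark on reading the statement: for $\Phi^{-1}(F)$ to be meaningful one should take $F$ to be a subset of the codomain $\mathbb S$; the enlargement $(\Phi^{-1}(F))^{\epsilon}$ is then formed in $(\mathbb X,d)$ and $F^{\|\Phi\|_{\lip}\epsilon}$ in $(\mathbb S,\sigma)$, each via the convention $A^{\epsilon}=\{x: d(x,A)\le\epsilon\}$ recalled just above the lemma.

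I would start by fixing an arbitrary $x\in(\Phi^{-1}(F))^{\epsilon}$, which by definition means $d\big(x,\Phi^{-1}(F)\big)=\inf_{x'\in\Phi^{-1}(F)}d(x,x')\le\epsilon$. Since this infimum need not be attained, for each $\eta>0$ I would pick a witness $x_\eta\in\Phi^{-1}(F)$ with $d(x,x_\eta)\le\epsilon+\eta$. Applying the Lipschitz bound to the pair $(x,x_\eta)$ gives $\sigma\big(\Phi(x),\Phi(x_\eta)\big)\le\|\Phi\|_{\lip}\,d(x,x_\eta)\le\|\Phi\|_{\lip}(\epsilon+\eta)$, and since $x_\eta\in\Phi^{-1}(F)$ we have $\Phi(x_\eta)\in F$, whence $\sigma\big(\Phi(x),F\big)\le\|\Phi\|_{\lip}(\epsilon+\eta)$. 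Letting $\eta\downarrow 0$ yields $\sigma\big(\Phi(x),F\big)\le\|\Phi\|_{\lip}\epsilon$, i.e.\ $\Phi(x)\in F^{\|\Phi\|_{\lip}\epsilon}$, which is precisely the assertion $x\in\Phi^{-1}\big(F^{\|\Phi\|_{\lip}\epsilon}\big)$. Since $x$ was arbitrary, the inclusion $(\Phi^{-1}(F))^{\epsilon}\subseteq\Phi^{-1}\big(F^{\|\Phi\|_{\lip}\epsilon}\big)$ follows.

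There is no genuine obstacle in this argument; the only point needing a sliver of care is that the distance-to-a-set infimum may fail to be attained, which is why I introduce the slack $\eta$ and pass to the limit at the end. If one assumes the infimum is attained (e.g.\ $\Phi^{-1}(F)$ compact, or $F$ closed in a suitable sense), the proof collapses to a single line: take the nearest point, apply the Lipschitz inequality, done.
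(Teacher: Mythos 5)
Your proof is correct and matches the paper's argument step for step: pick an almost-nearest point in $\Phi^{-1}(F)$, apply the Lipschitz bound to push it through $\Phi$, and pass to the limit in the slack. In fact your version is slightly cleaner, since the paper's proof (unnecessarily, and without justification, as $F$ is not assumed closed) invokes closedness of $\Phi^{-1}(F)$ to produce the approximating sequence, whereas you correctly observe that the existence of the $\eta$-witness follows from the definition of the infimum alone; you are also right that the statement's ``$F \subset \mathbbm{X}$'' should read $F \subset \mathbb S$.
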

\begin{proof}
Let $\zeta \in \left(\mathrm{\Phi}^{-1}(F)\right)^{\epsilon}$. 
For each $n$, there exists $\xi_n$ such that $\xi_n \in \Phi^{-1}(F)$ and $d(\zeta,\xi_n) \leq \epsilon + 1/n$.
Note that 
$
\sigma(\Phi(\zeta), F)
\leq \sigma(\Phi(\zeta), \Phi(\xi_n)) 
\leq \|\Phi\|_\lip \cdot d(\zeta,\xi_n) 
\leq \|\Phi\|_\lip \cdot (\epsilon + 1/n)
$.
Taking $n\to\infty$, we have that $\sigma(\Phi(\zeta), F) \leq \epsilon\cdot\|\Phi\|_\lip $. 
That is, $\Phi(\zeta) \in F^{\|\Phi\|_\lip \epsilon}$, or $\zeta \in \Phi^{-1}\big(F^{\|\Phi\|_\lip \epsilon}\big)$. 
Since $\zeta$ was chosen arbitrarily from $\big(\Phi^{-1}(F)\big)^\epsilon$, we arrive at the desired inclusion.

\end{proof}

The following lemma is a version of the contraction principle adapted to the setting of extended LDP's. 

\begin{lemma}\label{alternative-yes-and-not-extended-contraction-principle}
Let $(\mathbbm{X}, d)$ and $(\mathbb S, \sigma)$ be  metric spaces. Suppose that the sequence of probability measures of $(\bm{X}_n)$ satisfies the lower and upper bounds of extended LDP in $(\mathbbm{X},d)$ with speed $a_n$ and a function $I$ (that is not necessarily a rate function). Moreover, let  $\mathrm{\Phi}: \left( \mathbbm{X},d\right) \to \left(\mathbb S,\sigma \right)$ be a Lipschitz continuous mapping and
set
$I'(y) \triangleq \inf_{\mathrm{\Phi}(x) = y}I(x).$ Then,
\begin{itemize}
\item[i)]
 $\Phi(\bm{X}_n)$ satisfies the following lower and upper bounds:
for any open set $G \subseteq \mathbb{S}$,
\[
\liminf_{n \to \infty}\frac{1}{a_n} \log \P\left(\Phi(\bm{X}_n)  \in G\right) \geq -\inf_{x \in G}I'(x),
\]
and for any closed set  $F \subseteq  \mathbb{S}$,
\[
\limsup_{n \to \infty}\frac{1}{a_n} \log \P\left(\Phi(\bm{X}_n)  \in F\right) \leq -\lim_{\epsilon \to 0}\inf_{x \in F^{\epsilon}}I'(x).
\]
\item[ii)] Suppose, in addition, that $I$ is a rate function and $\mathrm{\Phi}$ is a homeomorphism. Then, $I'$ is a rate function, and 
\rvtxt{7-1}{7}{$\Phi(\bm{X}_n)$}
satisfies the extended LDP in $(\mathbb{S},\sigma)$ with speed $a_n$ and rate function $I'$. 
\item[iii)]
If $I'$  is a good rate function---i.e., $\Psi_{I'}(M)\triangleq \{y\in \mathbb S: I'(y) \leq M \}$ is compact for each $M\in [0,\infty)$---then 
\rvtxt{7-2}{7}{$\Phi({\bm X}_n)$}  
satisfies the LDP in $(\mathbb S, \sigma)$ with speed $a_n$ and good rate function $I'$.
\end{itemize}

\end{lemma}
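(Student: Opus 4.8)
The plan is to deduce all three parts from the hypothesis that $(\mathbf{X}_n)$ satisfies the extended LDP in $(\mathbbm{X},d)$, exploiting that $\mathrm{\Phi}$, being Lipschitz, is continuous, together with Lemma~\ref{pre-images-lipschitz-maps}. First I would record two elementary facts: $\P(\mathbf{S}_n\in B)=\P(\mathbf{X}_n\in\mathrm{\Phi}^{-1}(B))$ for every Borel $B\subseteq\mathbb S$, and $\inf_{x\in\mathrm{\Phi}^{-1}(B)}I(x)=\inf_{y\in B}I'(y)$ for every $B\subseteq\mathbb S$ (obtained by partitioning $\mathrm{\Phi}^{-1}(B)$ along the fibres $\mathrm{\Phi}^{-1}(\{y\})$, with the convention $I'(y)=+\infty$ when $y$ lies outside the range of $\mathrm{\Phi}$). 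For the lower bound in i), $\mathrm{\Phi}^{-1}(G)$ is open, so the extended LDP lower bound for $\mathbf{X}_n$ gives $\liminf_n a_n^{-1}\log\P(\mathbf{S}_n\in G)\geq -\inf_{x\in\mathrm{\Phi}^{-1}(G)}I(x)=-\inf_{y\in G}I'(y)$. For the upper bound in i), $\mathrm{\Phi}^{-1}(F)$ is closed, so the extended LDP upper bound yields $\limsup_n a_n^{-1}\log\P(\mathbf{S}_n\in F)\leq -\lim_{\epsilon\to0}\inf_{x\in(\mathrm{\Phi}^{-1}(F))^\epsilon}I(x)$; by Lemma~\ref{pre-images-lipschitz-maps} one has $(\mathrm{\Phi}^{-1}(F))^\epsilon\subseteq\mathrm{\Phi}^{-1}(F^{\|\mathrm{\Phi}\|_\lip\epsilon})$, hence $\inf_{(\mathrm{\Phi}^{-1}(F))^\epsilon}I\geq\inf_{F^{\|\mathrm{\Phi}\|_\lip\epsilon}}I'$, and the change of variable $\epsilon'=\|\mathrm{\Phi}\|_\lip\epsilon$ completes i) (this is legitimate because $\delta\mapsto\inf_{F^\delta}I'$ is non-increasing so the limit exists; the degenerate case $\|\mathrm{\Phi}\|_\lip=0$ is trivial, $\mathbf{S}_n$ being then a.s.\ constant).

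For ii), I would first upgrade the bounds of i) to an arbitrary measurable set $A$ by sandwiching $A^\circ\subseteq A\subseteq A^\epsilon$: applying i) to the open set $A^\circ$ gives the lower bound $-\inf_{A^\circ}I'$, while applying i) to the closed sets $A^\epsilon$, together with $(A^\epsilon)^{\epsilon'}\subseteq A^{\epsilon+\epsilon'}$, the monotonicity of $\delta\mapsto\inf_{A^\delta}I'$, and letting first $\epsilon'\downarrow0$ and then $\epsilon\downarrow0$, gives the upper bound $-\lim_{\epsilon\to0}\inf_{A^\epsilon}I'$. It then remains to check that $I'$ is an admissible rate function; non-negativity is inherited from $I\geq0$, and here the homeomorphism hypothesis enters: since $\mathrm{\Phi}$ is a bijection every fibre is a singleton, so $I'=I\circ\mathrm{\Phi}^{-1}$, which is lower semicontinuous, being the composition of the lsc function $I$ with the continuous map $\mathrm{\Phi}^{-1}$.

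For iii), the open-set lower bound is already furnished by i), so only the closed-set upper bound of the genuine LDP requires work, namely $\lim_{\epsilon\to0}\inf_{y\in F^\epsilon}I'(y)=\inf_{y\in F}I'(y)$ for closed $F$ when $I'$ is good. The inequality $\leq$ is clear; for $\geq$, set $m=\sup_{\epsilon>0}\inf_{F^\epsilon}I'$ and assume $m<\infty$, pick $y_n\in F^{1/n}$ with $I'(y_n)\leq m+1/n$, note that $\{y_n\}\subseteq\Psi_{I'}(m+1)$ is relatively compact, pass to a convergent subsequence $y_{n_k}\to y^\ast$, observe $y^\ast\in F$ (because $F$ is closed and $\sigma(y_n,F)\to0$), and conclude $I'(y^\ast)\leq\liminf_k I'(y_{n_k})\leq m$ by lower semicontinuity of $I'$ (a good rate function has closed level sets, hence is lsc). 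Combined with i), this is the full LDP with good rate function $I'$.

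The step I expect to demand the most care is the handling of the nested $\epsilon\to0$ limits in parts i) and ii): one must verify at each stage that the infimum over the $\epsilon$-enlargements is monotone in $\epsilon$ (so that the limits exist) and that the linear reparametrization by the Lipschitz constant is innocuous. Everything else is routine; in particular, it is worth emphasizing that the homeomorphism assumption in ii) is used solely to obtain lower semicontinuity of $I'$ and plays no role in the probabilistic estimates, which go through for any Lipschitz $\mathrm{\Phi}$.
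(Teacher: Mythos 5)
Your proof is correct and follows essentially the same route as the paper: part i) rests on the preimage of an open set being open plus Lemma~\ref{pre-images-lipschitz-maps} to relate enlargements of $\mathrm{\Phi}^{-1}(F)$ and $F$, and part ii) obtains lower semicontinuity of $I'$ from the homeomorphism hypothesis (your phrasing $I'=I\circ\mathrm{\Phi}^{-1}$ is equivalent to the paper's observation that $\Psi_{I'}(M)=\mathrm{\Phi}(\Psi_I(M))$ is closed). For part iii), the paper asserts $\lim_{\epsilon\to 0}\inf_{F^{\epsilon}}I'=\inf_F I'$ without justification, while you actually supply the standard compactness argument for it, so your write-up is slightly more complete there.
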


\begin{proof}
\textit{i)}
For  the upper bound, let $F$ be a closed subset of $\left(\mathbb{S},\sigma\right)$.
Thanks to Lemma~\ref{pre-images-lipschitz-maps}, for any $\epsilon>0$, we have that
$\left(\mathrm{\Phi}^{-1}(F)\right)^{\epsilon} \subseteq \mathrm{\Phi}^{-1}\left(F^{\,\epsilon\cdot\|\mathrm{\Phi}\|_\lip }\right)$.
Hence,
\begin{equation}\label{upperbound-of-the-upper-bound}
-\inf_{x \in \left(\mathrm{\Phi}^{-1}(F)\right)^{\epsilon} } I(x) 
\leq
-\inf_{x \in \mathrm{\Phi}^{-1}\left(F^{\, \epsilon\cdot\|\mathrm{\Phi}\|_\lip}\right)} I(x). 
\end{equation}
Furthermore, by the upper bound of the extended LDP of $\bm{X}_n$,  for any $\delta>0$ there exists an $n(\delta)$ such that for any $n \geq n(\delta),$
\begin{align}
\P(\Phi(\bm{X}_n) \in F) 
&=\P(\bm{X}_n \in \mathrm{\Phi}^{-1}(F)) 
\nonumber\\
&\leq 
\exp\left( a_n\left(-\inf_{x \in \left(\mathrm{\Phi}^{-1}(F)\right)^{\epsilon} } I(x)+ \delta\right)\right) 
\nonumber\\
&
\label{inequality-extended-spldp-of-Sn}
\leq \exp\left(a_n\left(-\inf_{x \in \mathrm{\Phi}^{-1}\left(F^{\,\epsilon\cdot \|\mathrm{\Phi}\|_\lip }\right)} I(x)+ \delta\right)\right),
\end{align} 
 $\text{for any} \ n \geq n(\delta) \ \text{and} \ \epsilon >0$. 
 Therefore, 
\begin{equation*}
\limsup_{n \rightarrow \infty}\frac{1}{a_n}\log\P\left( \Phi({\bm X}_n)  \in F\right) 
\leq
-\inf_{x \in \mathrm{\Phi}^{-1}\left(F^{\,\epsilon\cdot\|\mathrm{\Phi}\|_\lip}\right)}I(x) + \delta 
=
-\inf_{y \in F^{\, \epsilon \cdot\|\mathrm{\Phi}\|_\lip}}I'(y) + \delta.
\end{equation*}		 
Letting $\delta \to 0$ and then $\epsilon \to 0$, we arrive at the desired large deviation upper bound.

For the lower bound, consider an open set $G$. 
Since $\mathrm{\Phi}^{-1}(G)$ is open, 
\[
\liminf_{n \rightarrow \infty}\frac{1}{a_n}\log
\P\left(\Phi(\bm{X}_n)  \in G\right)
=
\liminf_{n \rightarrow \infty}\frac{1}{a_n}\log
\P\left({\bm X}_n \in \mathrm{\Phi}^{-1}(G)\right)
\ge -\inf_{y \in \mathrm{\Phi}^{-1}\left(G\right)}I(y)=-\inf_{x \in G}I'(x).
\]
\textit{ii)} Since the upper and lower bounds for the extended large deviation principle have been proved in $i)$, we only have to prove that $I'$ is lower semi-continuous. 
To see this,
note first that $I'(y) = I(\Phi^{-1}(y))$, and hence, for any $M>0$,
\[
\{y\in \mathbb S: I'(y) \leq M \}
=\{y\in \mathbb S:I(\Phi^{-1}(y)) \leq M \}
=\{\mathrm{\Phi}(x): I(x) \leq M \}=\mathrm{\Phi}(\Psi_{I}(M)).
\]
Since $\mathrm{\Phi}$ is a homeomorphism the r.h.s.\ is closed. Hence, $\Phi(\bm{X}_n)$ satisfies the extended LDP. 
\\
\textit{iii)} 
From the standard argument---see, for example, the proof of Theorem 4.2.1 of \cite{dembo2010large}---$I'$ is a good rate function. From Lemma 4.1.6 of \cite{dembo2010large}, we obtain
$
\lim_{\epsilon \to 0}\inf_{y \in F^{\epsilon\|\mathrm{\Phi}\|_\lip }}I'(y)=\inf_{y \in F}I'(y).
$
Consequently,  
\begin{align*}
 \limsup_{n \rightarrow \infty}\frac{\log\P\left( {\bm S}_n \in F\right) }{a_n}
\leq 
-\lim_{\epsilon \to 0}\inf_{y \in F^{ \epsilon \|\mathrm{\Phi}\|_\lip }}I'(y)=-\inf_{y \in F}I'(y).
\end{align*}
\end{proof}

\bibliographystyle{apalike}
\bibliography{bibliography}

\ifnotationindex
\newpage
\section*{Notation Index}
\begin{itemize}[leftmargin=*]
\item
    $\hyperlink{nota-one-dimensional-regulator}{\eta^\downarrow (t)} \triangleq \inf_{s\in[0,t]}0\wedge \eta(s)$

\item 
    $\hyperlink{nota-vector-norm}{\|\boldsymbol{\beta}\|_1}=\sum_{i=1}^d|\beta_i|$ for $\bm \beta =(\beta_1,\ldots,\beta_d) \in \R^d$

\item 
    $\hyperlink{nota-path-norm}{\|\bm \xi\|} = \sup_{t\in[0,T]} \|\bm\xi(t)\|_1$ for $\bm\xi = (\xi_1,\ldots, \xi_d)\in\prod_{i=1}^d \D[0,T]$

\item 
    $\hyperlink{nota-A^epsilon}{A^\epsilon} =  \{\xi\in \mathbb S: d(\xi, A) \leq \epsilon \}$ where $d(\xi, A) = \inf_{\zeta \in A} d(\xi, \zeta)$

\item 
    $\hyperlink{nota-alpha}{\alpha}: 
    \in (0,1)$: Weibull shape parameter for input $\P\big(J^{(i)}_1 \geq x \big) = e^{-c_i L(x)x^{\alpha}}$ 

\item 
    \hyperlink{nota-bm-beta}{$\bm \beta$}: 
    $\bm \beta =(\beta_1,\ldots,\beta_d) \in \R^d$

    
\item 
    $\hyperlink{nota-bm-b}{\bm{b}}=(b_1,\ldots,b_{d}) \in \R_+^{d}$: overflow weight vector

\item $\hyperlink{nota-c_i}{c_i} \in (0,\infty)$: a parameter for input distribution  $\P\big(J^{(i)}_1 \geq x \big) = e^{-c_i L(x)x^{\alpha}}$  

\item   
    \hyperlink{nota-D[0,T]}{$\D[0,T]$}: Skorokhod space, i.e., space of c\'adl\'ag paths

\item 
    $\hyperlink{nota-D-+}{\D_+[0,T]} = \{\xi \in \D[0,T]: \xi(t)-\xi(t-) \geq 0,\ \forall t\in [0,T]\}$
\item
    $\hyperlink{nota-D^uparrow}{\D^{\uparrow}[0,T]} = \{\xi\in\D[0,T]: \text{$\xi$ is non-decreasing, $\xi(0) \geq 0$}\}$

\item   
    $\hyperlink{nota-D-leq-infty-uparrow}{\D^\uparrow_{\leqslant \infty}[0,T]} = \{\xi \in \D[0,T]: \xi = \sum_{j=1}^\infty x^{(j)} \one_{[u^{(j)},T]}, x^{(j)} \geq 0, u^{(j)} \in [0,T],\,\forall j \geq 1\}$

\item
    $\hyperlink{nota-D-leq-k-uparrow}{\D^\uparrow_{\leqslant k}[0,T]} \triangleq \{\xi \in \D[0,T]: \xi= \sum_{j=1}^{k}x^{(j)}\mathbbm{1}_{[u^{(j)},T]}, x^{(j)} \geq 0, u^{(j)} \in [0,T],\  \forall j=1,\ldots,k \}$
    
\item
    $\hyperlink{nota-D_leqslant-k^beta}{\D^{\beta}_{\leqslant k}[0,T]} \triangleq \{\zeta \in \D[0,T]: \zeta(t)= \xi(t)+\beta \cdot t, \ \xi \in \D^{\uparrow}_{\leqslant k}[0,T] \}$
\item
    $\hyperlink{nota-D_leqslant-infty^beta}{\D^{\beta}_{\leqslant \infty}[0,T]} \triangleq \{\zeta \in \D[0,T]: \zeta(t)= \xi(t)+\beta \cdot t, \ \xi \in \D^{\uparrow}_{\leqslant \infty}[0,T] \}$
\item 
    $\hyperlink{nota-D_leqslant-k}{\D_{\leqslant k}[0,T]} = \{\xi\in \D[0,T]: |Disc(\xi)| \leq k\}$.
\item 
    $\hyperlink{nota-D^beta}{\D^{\beta}[0,T]} \triangleq \{\zeta \in \D[0,T]: \zeta(t)= \xi(t)+\beta \cdot t, \ \xi \in \D^{\uparrow}[0,T] \}$.
\item 
    \hyperlink{nota-d-p}{$d_p$}: 
    $
    d_p(\bm \xi,\bm\zeta) = \sum_{i=1}^{k}d_{J_1}(\xi_i,\zeta_i)
    $
    for $\bm \xi, \bm \zeta \in \prod_{i=1}^{k}\D[0,T]$ s.t.\ $\bm \xi=(\xi_1,\ldots,\xi_k)$ and $\bm \zeta=(\zeta_1,\ldots,\zeta_k)$
    
\item 
    $\hyperlink{nota-e}{e(t)} = t$

\item 
    $\hyperlink{nota-bm-e}{\bm e} = (e,\ldots,e)$ 

\item 
    $\hyperlink{nota-bm-gamma}{\boldsymbol{\gamma}} = (\gamma_1, \ldots, \gamma_d)$; 

\item    
    $\hyperlink{nota-gamma_i}{\gamma_i} = \E J_i^{(1)}$


\item
    $\hyperlink{nota-iota}{\iota}\in\D[0,T]$: $\iota(t) \equiv 1$.

\item
    $\hyperlink{nota-bm-iota}{\bm\iota} = (\iota, \iota, \ldots, \iota) \in \prod_{i=1}^d \D[0,T] $

\item 
    $\hyperlink{nota-I^(d)}{{I}^{(d)}(\bm\xi)}
    =
    \begin{cases}
    \sum_{j \in \mathcal{J}}c_jI(\xi_j)  & \text{if}\quad \xi_j \in \D^{\uparrow}_{\leqslant \infty}[0,T]  \quad\text{for}\quad j \in \mathcal{J} 
    \quad\text{and}\quad \xi_j \equiv 0 \quad\text{for}\quad j \notin \mathcal{J},
     \\
    \infty & otherwise.\\
    \end{cases}
    $
    
\item
    $
    \hyperlink{nota-tilde-I^(d)}{\tilde{I}^{(d)}(\bm\xi)}
    =
    \begin{cases}
    \sum_{j \in \mathcal{J}}c_jI(\xi_j)
    & \text{if}\quad \xi_j \in \D^{(\boldsymbol{\gamma}-\mathcal{Q}\bm{r})_i}_{\leqslant \infty}[0,T] 
    \quad\text{for}\quad j \in \mathcal{J} 
    \\ 
    & \quad\text{and}\quad \xi_j = -(\mathcal{Q}\bm{r})_{j}\cdot e 
    \quad\text{for}\quad j \notin \mathcal{J},  
    \\
    \infty & otherwise.\\
    \end{cases}
    $
    
\item  
    $
    \hyperlink{nota-I-S}{I_{\bm Z}(\bm\zeta)}
    =
    \inf\left\{\tilde{I}^{(d)}(\bm\xi): \bm\xi\in \phi^{-1}(\bm\zeta) \right\}.
    $
    
\item   
    $
    \hyperlink{nota-I-prime}{I'(x)} \triangleq \inf\left\{\tilde{I}^{(d)}(\bm\xi): \bm \xi\in \mathscr{B}^{-1}(x)\right\}
    $
    
\item   
    $\hyperlink{nota-I^+}{I^+}=\{j \in \{1,\ldots,d\}: b_j >0\}$.


\item   
    $\hyperlink{nota-bar-J_n}{{\bar{\bm{J}}}_n(\cdot)} \triangleq \frac1n\bm{J}(n\cdot)-\bm\gamma\cdot e(\cdot)$: scaled and centered input process

\item 
    $\hyperlink{nota-bm-J}{\bm{J}(\cdot)} = \big(J_1(\cdot),\ldots,J_d(\cdot)\big)$: the vector of input  processes at each one of the $d$ nodes.

\item 
    $\hyperlink{nota-J_i-i-in-mathcal-J}{J_i(t)} \triangleq \sum_{j=1}^{N_i(t)}J^{(j)}_i$ for $i \in \mathcal J$
    
\item 
    $\hyperlink{nota-J_i-i-notin-mathcal-J}{J_i(\cdot)} \equiv 0$ for $i\notin \mathcal J$.        
    
\item 
    $\hyperlink{nota-mathcal-J}{\mathcal{J}}$ the subset of nodes that have an exogenous input.    
    
\item 
    $\hyperlink{nota-L}{L}$ is a slowly varying function such that $L(x)/x^{1-\alpha}$ is non-increasing for sufficiently large $x$.  

\item 
    $\hyperlink{nota-Lambda}{\Lambda[0,T]}$: set of all increasing homeomorphisms from $[0,T]$ to $[0,T]$.
\item 
    $\hyperlink{nota-N_i}{\{N_i(t)\}_{t \geq 0}}$: unit-rate Poisson process $i \in \mathcal{J}$
\item 
    $\hyperlink{nota-Psi}{\Psi(\bm\xi)} \triangleq \left\{\bm\zeta \in \prod_{i=1}^{d}\D^{\uparrow}[0,T]: \bm\xi+\mathcal{Q}  \bm\zeta \geq 0\right\}$
\item 
    $\hyperlink{nota-psi}{\psi(\bm\xi)} \triangleq \inf \left\{\Psi(\bm\xi)\right\} = \inf\left\{\bm w \in \prod_{i=1}^{d}\D[0,T]: \bm w \in \Psi(\bm\xi)\right\}$
\item 
    $\hyperlink{nota-phi}{\phi(\xi)} \triangleq \bm\xi+ \mathcal{Q}  \psi(\bm\xi).$
\item 
    $\hyperlink{nota-bm-R}{\bm{R}} \triangleq (\psi,\phi): \prod_{i=1}^{k}\D[0,T] \to \prod_{i=1}^{k}\D[0,T]\times \prod_{i=1}^{k}\D[0,T]$
\item 
    $\hyperlink{nota-Q}{Q} \triangleq [q_{ij}]_{i,j \in \{1,\ldots,d\}} $ is a $d \times d$ substochastic routing matrix, and
\item 
    $\hyperlink{nota-mathcal-Q}{\mathcal{Q}}=(\mathrm{I}-Q^\intercal)$    

\item 
    $\hyperlink{nota-bm-r}{\bm{r}} \triangleq  (r_1 ,\ldots , r_d )^\intercal$: the vector of deterministic (maximum) output rates
        
\item
    $\hyperlink{nota-Upsilon_bm-beta}{\Upsilon^{\bm\beta}:} \prod_{i=1}^{d}\D[0,T] \to \prod_{i=1}^{d}\D[0,T]$:\quad
    $\Upsilon^{\bm\beta}(\bm\xi)(t)=\bm\xi(t)+\bm\beta \cdot t$
        
\item
    $\hyperlink{nota-V_geqslant}{V_{\geqslant}(y)} \triangleq \{\xi \in \prod_{i=1}^{d} \D^{(\boldsymbol{\gamma}-\mathcal{Q}\bm{r})_i}_{\leqslant \infty}[0,T]: \mathscr{B}(\xi) \geq y \},$
\item
     $\hyperlink{nota-V_geqslant^star}{V_{\geqslant}^*(y)}\triangleq\inf_{\xi \in V_{\geqslant}(y)}\tilde{I}^{(d)}(\xi)$.
\item
    $\hyperlink{nota-V_>}{V_{>}(y)} \triangleq \{\xi \in  \prod_{i=1}^{d} \D^{(\boldsymbol{\gamma}-\mathcal{Q}\bm{r})_i}_{\leqslant \infty}[0,T]: \mathscr{B}(\xi) > y \}$
\item
    $\hyperlink{nota-V_>^star}{V_{>}^*(y)}\triangleq\inf_{\xi \in V_{>}(y)}\tilde{I}^{(d)}(\xi).$
    
\item $\hyperlink{nota-bm-X(0)}{\bm{X}(0)} \triangleq (X^{(1)} (0),\ldots, X^{(d)}(0))$ is a nonnegative random vector of initial contents at the $d$ nodes    
\item
    $\hyperlink{nota-bm-X(t)}{\bm{X}(t)} \triangleq \bm{X}(0)+ \bm{J}(t)-\mathcal{Q}rt$: potential content
vector

\item
    $\hyperlink{nota-bm-X_n}{\bm{X}_n(\cdot)}=\frac{1}{n}\bm{X}(n\cdot)$
    
\item 
    $\hyperlink{nota-bm-Y}{\bm{Y}(\cdot)}=\psi(\bm{X})(\cdot)$: regulator
    
\item   
    $\hyperlink{nota-bm-Z}{\bm{Z}(\cdot)}=\phi(\bm{X})(\cdot)$: buffer content 
    


\end{itemize}

\fi

\iftheoremtree
\newpage
\section*{Theorem Tree}
\begin{thmdependence}[leftmargin=*]
\thmtreenode{\complete}
    {Theorem}{samplepathldpofstochasticnetworks}
    {0.8}{$\bm Z_n$ satisfies the upper and lower bounds of extended LDP}

    \begin{thmdependence}
    \thmtreenode{\complete}
        {Theorem}{lipschitz-continuity-phi} 
        {0.7}{
        $\bf R: \xi \mapsto (\phi(\xi), \psi(\xi))$ is Lipschitz w.r.t.\ $d_p$ on $\prod_{i=1}^{d}\D^{\beta_i}[0,T]$
        }
        
        \begin{thmdependence}
        \thmtreenodeproof{\complete}
            {Proposition}{j1-product-uniform-continuity-regulator}
            {0.7}{
            $\psi$ is Lipschitz w.r.t.\ $d_p$ on $\prod_{i=1}^{d}\D^{\beta_i}[0,T]$
            }

            \begin{thmdependence}
            \thmtreenode{\complete}
                {Lemma}{the-desired-Lip-bound}
                {0.8}{
                $\|\psi(\xi_1\circ w_1,\ldots,\xi_d\circ w_d)-\psi(\bm\xi)\| \leq d(2d+1)K\|\boldsymbol{\beta}\|_1 \cdot \|\bm w-\bm e\|$
                }

                \begin{thmdependence}
                \thmtreenode{\complete}
                    {Lemma}{lemma-uniform-supremum-norm}
                    {0.8}{
                    $\|\psi(\bm\zeta)\circ w-\psi(\bm\zeta)\|< K \|\boldsymbol{\beta}\|_1\cdot \| w- e\|.$
                    }

                \thmtreenode{\complete}
                    {Lemma}{bound-multiple-time-deformations}
                    {0.8}{
                    i) $\psi(\xi_1\circ w_1,\ldots,\xi_d \circ w_d) \leq \psi(\bm\xi)\circ \hat{w}+ (d+1)K\|\boldsymbol{\beta}\|_{\infty}\cdot \|\bm w- \bm e\|\cdot \bm\iota$\\
                    ii) $\psi(\xi_1\circ w_1,\ldots,\xi_d\circ w_d) + (d+1)K\|\boldsymbol{\beta}\|_{\infty}\cdot \|\bm w- \bm e\|\cdot \bm\iota \geq \psi(\bm\xi)\circ\check{w}$
                    }
                \end{thmdependence}
            
            \thmtreenode{\complete}
                {Lemma}{existence-of-almost-odentity-parameterizations}
                {0.7}{$\|\lambda\circ \mu-e\| \leq \|\lambda - e\| + \|\mu - e\|$}
            
            \end{thmdependence}
        \end{thmdependence}

    \thmtreenode
    {\complete}
    {Lemma}
    {alternative-yes-and-not-extended-contraction-principle}
    {0.85}
    {Lipschitz Continuous Mapping Principle for extended LDP}
    
        \begin{thmdependence}
        \item[\complete]
            Lemma~\ref{pre-images-lipschitz-maps}
            \thmsum{0.7}{$\left(\mathrm{\Phi}^{-1}(F)\right)^{\epsilon} \subseteq \mathrm{\Phi}^{-1}\left(F^{\,\epsilon\cdot\|\mathrm{\Phi}\|_\lip }\right)$}
        \end{thmdependence}
        

    \item [\complete]
        Result~\ref{theorem:multi-d+1-content}
        \thmsum{0.8}{$\bar {\bm X}_n$ satisfies extended LDP with speed $L(n)n^\alpha$ and rate function $\tilde I^{(d)}$}
        
        \begin{thmdependence}
        
        \item[\complete]
            Result~\ref{sample-path-ldp-for-Jn} 
            \thmsum{0.8}{$\bar {\bm J}_n$ satisfies extended LDP with speed $L(n)n^\alpha$ and rate function $I^{(d)}$}
            
            \begin{thmdependence}
            
            \item[\complete]
                Theorem~2.3 and Remark~2.2 in \cite{bazhba2020sample}
            \item[\complete]
                Lemma~\ref{E-LDP-on-subspaces-full-measure}
                \thmsum{0.8}{If $X_n$ satisfies extended LDP in $\mathcal X$ and $P(X_n\in E) = 1$ where $\mathcal D_I \subseteq E$, then $X_n$ satisfies the same extended LDP in $E$.}
            \end{thmdependence}
            
        \item[\complete]
            Lemma~\ref{Continuity-Upsilon-k-multi}
            \thmsum{0.8}{$\bm\xi \mapsto \bm\xi - \bm\beta\cdot e$ is Lipschitz w.r.t.\ $d_p$, and it is a homeomorphism}
            
        \thmtreeref
        {Lemma}
        {alternative-yes-and-not-extended-contraction-principle}

        \end{thmdependence}

    \end{thmdependence}

\bigskip

\item[\complete]
    Theorem~\ref{LDP-Y-end-of-time-horizon}
    \thmsum{0.8}{$\lim_{n \to \infty}\frac{1}{L(n)n^{\alpha}}\log \P(\bm b^\intercal \bm{Z}_n(T) \geq y)=-V_{\geqslant}^*(y).$}
        
    \begin{thmdependence}

    \item[\complete] 
        Lemma~\ref{continuity-of-the-projection-map}
        \linktoproof{proof of continuity-of-the-projection-map}
        \thmsum{0.8}{$\bm\xi \mapsto \bm b^\intercal \bm \xi(T)$ is Lipschitz w.r.t.\ $d_p$}

    \thmtreeref
        {Lemma}
        {alternative-yes-and-not-extended-contraction-principle}

    \thmtreeref
        {Theorem}
        {samplepathldpofstochasticnetworks}

    
    \thmtreenodeproof
        {\complete}
        {Lemma}
        {convergence-of-quasi-variational-V}
        {0.4}
        {$x\mapsto V_{\geqslant}^*(x)$ is $\alpha$-H\"older continuous}
        
        \begin{thmdependence}
        \item[\complete] 
            Lemma~\ref{construction-of-extra-jump}
            \thmsum{0.85}{Adding jumps at the end of the time horizon increases $\tilde I^{(d)}$ only so much.}
            \begin{thmdependence}
            \item[\complete]
                Result~\ref{size-discontinuities-of-phi}
                \thmsum{0.77}{$\phi(\bm\xi)$ and $\psi(\bm\xi)$ are discontinuous only at the discontinuities of $\bm\xi$; if $\bm\xi$ has no downward jumps, $\psi(\bm\xi)$ is continuous.}
            \end{thmdependence}
        \end{thmdependence}
    
    \item[\complete]
        Lemma~\ref{equality-of-VbVbt}
        \linktoproof{proof of equality-of-VbVbt}
        \thmsum{0.2}{$V_{\geqslant}^*(x) = V_{>}^*(x)$}

        \begin{thmdependence}

        \thmtreeref
            {Lemma}
            {convergence-of-quasi-variational-V}


        \end{thmdependence}
    
    \end{thmdependence}

\bigskip

\thmtreenodeproof{\complete}
    {Lemma}{reduction-to-one-step-functions}
    {0.8}{
    In two-node tandem network, $\bm\xi^* \in \D_{\leqslant 1}^{(\bm\gamma - \mathcal Q \bm r)_1}[0,T]\times \D_{\leqslant 1}^{(\bm\gamma - \mathcal Q \bm r)_2}[0,T]$
    }
    \begin{thmdependence}
    \thmtreenode{\complete}
        {Lemma}{comparison of one dimensional reflection}
        {0.8}{
        If $\eta \geq \omega$ and $\eta(T) = \omega(T)$,  
        then $(\eta -\eta^\downarrow)(T) \leq (\omega -\omega^\downarrow)(T)$.}
    \end{thmdependence}

\bigskip

\item[\complete]
    Result~\ref{continuity-of-the-reflection-map}
    \thmsum{0.5}{$\bm Y = \psi(\bm X)$ and $\bm Z = \phi(\bm X)$}

\item[\complete]
    Result~\ref{generic-upperbound-for-psi}
    \thmsum{0.5}{$\psi$ is non-increasing}
    \hfill(in Lemma~\ref{lemma-uniform-supremum-norm}, \ref{bound-multiple-time-deformations})

\end{thmdependence}
\fi

\end{document}